\numberwithin{equation}{section}
\numberwithin{figure}{section}
\theoremstyle{plain}
\newtheorem{thm}{\protect\theoremname}
  \theoremstyle{plain}
  \newtheorem{conjecture}[thm]{\protect\conjecturename}
  \theoremstyle{plain}
  \newtheorem{prop}[thm]{\protect\propositionname}
  \theoremstyle{remark}
  \newtheorem{rem}[thm]{\protect\remarkname}
  \theoremstyle{plain}
  \newtheorem{lem}[thm]{\protect\lemmaname}
\newcommand{\Affi}{\mathbb A}
  \providecommand{\propositionname}{Proposition}
\providecommand{\theoremname}{Theorem}
  \providecommand{\conjecturename}{Conjecture}
  \providecommand{\lemmaname}{Lemma}
  \providecommand{\propositionname}{Proposition}
  \providecommand{\remarkname}{Remark}
\providecommand{\theoremname}{Theorem}
  \providecommand{\conjecturename}{Conjecture}
  \providecommand{\lemmaname}{Lemma}
  \providecommand{\propositionname}{Proposition}
  \providecommand{\remarkname}{Remark}
\providecommand{\theoremname}{Theorem}
\begin{document}

\title[A Local-Global Equality on Every Affine Variety]{A Local-Global Equality on Every Affine Variety Admitting Points
in An Arbitrary Rank-One Subgroup of a Global Function Field}
\begin{abstract}
For every affine variety over a global function field, we show that
the set of its points with coordinates in an arbitrary rank-one multiplicative
subgroup of this function field is topologically dense in the set
of its points with coordinates in the topological closure of this
subgroup in the product of the multiplicative group of those local
completions of this function field over all but finitely many places. 
\end{abstract}

\author{Chia-Liang Sun}
\maketitle

\section{Introduction}

Let $K$ be a global function field over a finite field $k$ of positive
characteristic $p$. We denote by $k^{\text{alg}}$ the algebraic
closure of $k$ inside a fixed algebraic closure $K^{\text{alg}}$
of $K$. Let $\Sigma_{K}$ be the set of all places of $K$. For each
$v\in\Sigma_{K}$, denote by $K_{v}$ the completion of $K$ at $v$;
by $O_{v}$, $\mathfrak{m}_{v}$, and $\mathbb{F}_{v}$ respectively
the valuation ring, the maximal ideal, and the residue field associated
to $v$. For each finite subset $S\subset\Sigma_{K}$, we denote by
$O_{S}$ the ring of $S$-integers in $K$. For any commutative ring
$R$ with unity, denote by $R^{*}$ the group of its units. We fix
a subgroup $\Gamma\subset O_{S}^{*}$ for some finite $S\subset\Sigma_{K}$.
Let $M$ be a natural number, and $\Affi^{M}$ be the affine $M$-space,
whose coordinate is denoted by ${\mathbf{X}}=(X_{1},\ldots,X_{M})$.
For each polynomial $f\in K[X_{1},\ldots,X_{M}]$, we denote by $H_{f}$
the hypersurface in $\Affi^{M}$ defined by $f$. If the total degree
of $f$ is one, we say that $H_{f}$ is a hyperplane. By a linear
$K$-variety in $\Affi^{M}$, we mean an intersection of $K$-hyperplanes.
We say that a closed $K$-variety $W$ in $\Affi^{M}$ is homogeneous
if $W$ can be defined by homogeneous polynomials.

For any closed $K$-variety $W$ in $\Affi^{M}$ and any subset $\Theta$
of some ring containing $K$, let $W(\Theta)$ denote the set of points
on $W$ with each coordinate in $\Theta$. For each subset $\widetilde{S}\subset\Sigma_{K}$,
we endow $\prod_{v\in\widetilde{S}}K_{v}^{*}$ with the natural product
topology; via the diagonal embedding, we identify $W(\Gamma)$ with
its image $W(\Gamma)_{\widetilde{S}}$ in $W\left(\prod_{v\in\widetilde{S}}K_{v}^{*}\right)$
and denote by $\overline{W(\Gamma)_{\widetilde{S}}}$ its topological
closure. We naturally identify $\Gamma$ with $\Affi^{1}(\Gamma)$,
and write $\overline{\Gamma_{\widetilde{S}}}$ for $\overline{\Affi^{1}(\Gamma)_{\widetilde{S}}}$.
For each place $v\in\Sigma_{K}$, we write $\overline{\Gamma_{v}}$
for $\overline{\Gamma_{\{v\}}}$. Note that $\overline{\Gamma_{\widetilde{S}}}\subset\prod_{v\in\widetilde{S}}\overline{\Gamma_{v}}$.
We fix a cofinite subset $\Sigma\subset\Sigma_{K}$, and drop the
lower subscript $\Sigma$ in the notation of topological closure;
for example, we simply write $\overline{\Gamma}$ for $\overline{\Gamma_{\Sigma}}$.

For any closed $K$-variety $W$ in $\Affi^{M}$, we consider the
following conjecture. 
\begin{conjecture}
\label{conj: my} $W(\overline{\Gamma})=\overline{W(\Gamma)}$.
\end{conjecture}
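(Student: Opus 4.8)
Throughout I assume, as promised by the title, that $\Gamma$ has rank at most one; the argument will make essential use of this. The inclusion $\overline{W(\Gamma)}\subseteq W(\overline{\Gamma})$ is formal: $W(\Gamma)\subseteq W(\overline{\Gamma})$ because $\Gamma\subseteq\overline{\Gamma}$, and $W(\overline{\Gamma})$ is closed in $W\bigl(\prod_{v\in\Sigma}K_{v}^{*}\bigr)$ since $\overline{\Gamma}$ is closed in $\prod_{v\in\Sigma}K_{v}^{*}$ and $W$ is cut out of $\Affi^{M}$ by continuous polynomial conditions. So the whole content is the reverse inclusion $W(\overline{\Gamma})\subseteq\overline{W(\Gamma)}$. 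Unwinding the product topology, this says: for every $P=(\xi_{1},\dots,\xi_{M})\in W(\overline{\Gamma})$, every finite $T\subseteq\Sigma$, and every family of open sets $U_{v,j}\ni\xi_{j}$ (for $v\in T$, $1\le j\le M$), there is a point $Q\in W(\Gamma)$ whose $j$-th coordinate lies in $U_{v,j}$ for all $v\in T$. One half of the difficulty is already absent: since $\overline{\Gamma_{T}}$ is by definition the closure of the image of $\Gamma$ in $\prod_{v\in T}K_{v}^{*}$, and the $T$-component of each $\xi_{j}$ lies in $\overline{\Gamma_{T}}$, every coordinate $\xi_{j}$ is separately $T$-adically approximable by elements of $\Gamma$. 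The real task is to do this for all $j$ simultaneously by a single point that in addition lies on $W$.

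The plan is to reduce first to the case of a linear $K$-variety, and then to exploit the severely restricted structure that rank at most one forces on $\overline{\Gamma}$. For the reduction: since $\Gamma$ is a multiplicative group, every monomial ${\mathbf X}^{\alpha}$ evaluated on $\Gamma^{M}$ again lies in $\Gamma$. Introducing one new coordinate $Y_{\alpha}$ for each monomial occurring in a fixed finite generating set of the ideal of $W$, the condition ``${\mathbf X}\in W$'' becomes a system of \emph{linear} equations in the $Y_{\alpha}$ together with the binomial relations recording the multiplicative dependencies among the $Y_{\alpha}$; the latter cut out a translate $cT$ of a subtorus $T\subseteq\Gm^{N}$. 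After a monomial change of coordinates straightening $T$, one is left with a linear $K$-variety in fewer variables (with certain coordinates fixed to prescribed values) intersected with $(\Gamma')^{N'}$, where $\Gamma'$ is obtained from $\Gamma$ by forming finitely many products and roots of bounded order and by adjoining finitely many constants, hence still has rank at most one. This translation is compatible with the topological closures on both sides because it is given by monomial maps, and such maps are continuous on each $\prod_{v}K_{v}^{*}$ and carry closures into closures; shrinking $\Sigma$ to avoid the finitely many places where these maps misbehave (or folding them into $S$) is harmless since $\Sigma$ is only assumed cofinite. This leaves a linear $K$-variety $W$, which I would treat by induction on $M$ through the coordinate projection $\Affi^{M}\to\Affi^{M-1}$: approximate the image of $P$ by a $\Gamma$-point $Q'$ of the image variety using the inductive hypothesis, and then lift $Q'$ to a $\Gamma$-point of $W$ whose last coordinate approximates $\xi_{M}$.

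The input that powers both the lifting step and the base case is the near-procyclicity of $\overline{\Gamma}$ when $\Gamma=\mu\cdot\langle\gamma\rangle$ has rank at most one: its projection to $\prod_{v\notin S}\mathbb{F}_{v}^{*}$ is the closure of a single cyclic group, its projection to the valuation lattice $\prod_{v}\mathbb{Z}$ is the cyclic group generated by the finitely supported vector $(v(\gamma))_{v}$, and its projection to $\prod_{v}(1+\mathfrak{m}_{v})$ is a cyclic pro-$p$ module; thus $\overline{\Gamma}$ is topologically generated, up to a free part supported on finitely many places of $S$, by the single element $(\gamma)_{v}$. Consequently the constraint that a point with coordinates $\zeta_{j}\gamma^{a_{j}}\in\Gamma$ lie on the linear variety $W$ becomes, after clearing denominators, a finite list of $S'$-unit equations $\sum_{j}b_{\ell j}\zeta_{j}\gamma^{a_{j}}=d_{\ell}$ whose solution sets, by Mason's $abc$ inequality over function fields (or, in this rank-one situation, by an elementary height-and-pigeonhole count over residue fields), consist of finitely many configurations of the data $(a_{j},\zeta_{j})$ up to Frobenius twist. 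The plan is then to take the configuration underlying $P$ and realize it by an actual exponent $n$ and honest roots of unity that simultaneously match $P$ at every $v\in T$: matching at $v$ imposes a congruence on $n$ modulo an integer determined by the local data at $v$, and these congruences are solvable simultaneously by the Chinese Remainder Theorem precisely because their mutual compatibility is exactly what distinguishes the hypothesis $P\in W(\overline{\Gamma})$ from the weaker $P\in W\bigl(\prod_{v\in\Sigma}\overline{\Gamma_{v}}\bigr)$.

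The genuine obstacle I expect is this last step --- converting a point of $W$ with coordinates in the completion $\overline{\Gamma}$ into a bona fide $\Gamma$-point of $W$ approximating it; equivalently, after the reduction, the local-global assertion that a system of $S'$-unit equations solvable ``$\Gamma$-adically, and coherently, at cofinitely many places'' is solvable in $\Gamma'$ with prescribed approximate values at a prescribed finite place set. This is where rank at most one must be used in an essential way: for rank $\ge 2$ the exponent lattice is $\mathbb{Z}^{r}$ with $r\ge2$, no such congruence argument is available, and indeed the statement is there only a conjecture. The remaining points --- the base case $M=1$ (elementary), the bookkeeping of the finitely many excluded places, the nonemptiness and surjectivity issues in the fibration step, and the verification that the monomial reduction respects both the closures and the homogeneity hypothesis of the statement --- I expect to be routine, if somewhat lengthy.
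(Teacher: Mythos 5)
Your write-up is a plan rather than a proof, and the step you yourself flag as ``the genuine obstacle'' is precisely the core of the matter, for which you offer no argument. Converting a point of $W(\overline{\Gamma})$ into nearby honest $\Gamma$-points \emph{on} $W$ is what the paper accomplishes with Proposition \ref{key}: starting from a sequence of $\Gamma$-points converging adelically into $W$, divisibility of the values $f_{j}(T^{e_{1,n}},\ldots,T^{e_{M,n}})$ by the polynomials $g_{a,b_0}(T)=(T^{ab_0}-1)/(T^{a}-1)$ forces, via Lemma \ref{lem:partition}, partitions of the monomials into blocks with vanishing coefficient sums and exponents congruent modulo $ab_0$; a pigeonhole argument stabilizes the partition, and then the elementary Smith-normal-form argument of Lemma \ref{lem:lin_alg} perturbs the exponent vectors into \emph{exact} integer solutions of the resulting linear system while keeping the perturbation divisible by arbitrarily large integers, so the new $\Gamma$-points still converge to the given adelic point. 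Your substitute --- classify solutions of the associated unit equations into finitely many configurations up to Frobenius twist and then solve congruences on the exponent $n$ by the Chinese Remainder Theorem --- is not an argument: nothing in your sketch shows that the prescribed adelic point is matched by \emph{any} exact configuration at all (existence of exact solutions is exactly what Lemma \ref{lem:lin_alg} supplies, and it is not a CRT statement), and the ``finitely many Frobenius families'' structure is exactly the kind of description whose adelic analogue the paper shows to fail in Remark \ref{rem:old_red}, so an argument that first describes $W(\Gamma)$ this way and then tries to match the adelic point against it needs a genuinely new idea, not bookkeeping.

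Two of your reductions are also unsound as stated. First, the induction on $M$ through the coordinate projection requires lifting a $\Gamma$-point of the image variety to a $\Gamma$-point of $W$; solving the linear equation for the last coordinate produces an element of $K$ with no reason to lie in $\Gamma$ (let alone to approximate $\xi_M$), so the fibration step collapses without a new input. Second, straightening the monomial subtorus replaces $\Gamma$ by a group involving ``roots of bounded order,'' which need not exist in $K^{*}$, and it ignores the field-of-definition issue: the coefficients of $W$ lie in $K$, not in $k(\Gamma)$, and the paper needs the Derksen--Masser-type adelic input (Proposition \ref{prop:old_red}, packaged as Propositions \ref{cor:old_red} and \ref{prop:red}) precisely to descend to a variety defined over $k(\Phi)$ for a free rank-one $\Phi$ before any exponent analysis can begin; your sketch has no counterpart to this descent. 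So while your easy inclusion and your general instinct (rank one makes $\overline{\Gamma}$ nearly procyclic, and one should turn membership in $W$ into congruence conditions on a single exponent) point in the right direction, the proof of the decisive local-to-exact step, and the reductions needed to reach the setting where it applies, are missing.
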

First formulated in this form by the author \cite{Sun-Skolem_Conj},
Conjecture \ref{conj: my} is an analog for split algebraic tori to
Conjecture C in \cite{PoonenVoloch} for Abelian varieties. One of
the deepest aspects of Conjecture \ref{conj: my} is explained as
follows. If $W=\bigcup_{i\in I}W_{i}$ is a finite union of closed
$K$-varieties in $\mathbb{A}^{M}$, then it is easy to see that 
\[
W(\overline{\Gamma})\supset\bigcup_{i\in I}W_{i}(\overline{\Gamma})\supset\bigcup_{i\in I}\overline{W_{i}(\Gamma)}=\overline{W(\Gamma)};
\]
thus if Conjecture \ref{conj: my} holds for $W$, then we must have
$W(\overline{\Gamma})=\bigcup_{i\in I}W_{i}(\overline{\Gamma})$.
In fact, following the idea proposed by Stoll (Question 3.12 in \cite{Stoll--Finite_descent_obstructions},
so-called ``Adelic Mordell-Lang Conjecture\textquotedblright ) and
first realized by Poonen and Voloch \cite{PoonenVoloch}, all previous
results \cite{Sun,Sun-Skolem_Conj} dealing with Conjecture \ref{conj: my}
for reducible $W$ are established by reducing it to the assertion
that $Z(\overline{\Gamma})=\bigcup_{i\in I}Z_{i}(\overline{\Gamma})$
for every finite union $Z=\bigcup_{i\in I}Z_{i}$ of irreducible zero-dimensional
$K$-varieties in $\mathbb{A}^{M}$, and proving this assertion via
an argument invented by Poonen and Voloch \cite{PoonenVoloch}, who
managed to bypass the difficulty encountered when one tries to develop
the function-field analog of the proof by Stoll \cite{Stoll--Finite_descent_obstructions}
of the number-field counterpart of this assertion. In the present
setting, the Mordell-Lang Conjecture is treated by Derksen and Masser
\cite{DM} in full generalities. The author \cite{Sun-Skolem_Conj}
establishes some ``adelic analog'' (restated as Proposition \ref{prop:old_red}
below) of their result in certain case, and completes the aforementioned
reduction under the artificial hypothesis induced from this analog;
this hypothesis is then put in the main result in \cite{Sun-Skolem_Conj}
on Conjecture \ref{conj: my}. In the remaining case, however, no
adelic analog exists (see Remark \ref{rem:old_red}); thus it is hopeless
to completely solve Conjecture \ref{conj: my} via this ``Mordell-Lang
approach''.

In this paper, we tackle Conjecture \ref{conj: my} in a different
approach; although we still need Proposition \ref{prop:old_red},
its present usage is to reduce Conjecture \ref{conj: my} to the situation
where $W$ is defined over a hopefully smaller subfield of $K$ without
putting on any assumption on $W$. In the case where $\Gamma$ has
rank one, we directly prove Conjecture \ref{conj: my} in this situation
by generalizing of Proposition 24 in the author's recent work \cite{Sun-const_hyper}
via introducing Lemma \ref{lem:lin_alg}, which is an elementary linear-algebra
argument. Our approach leads to the following main result, in which
no hypothesis is put on $W$.
\begin{thm}
\label{thm:main}Suppose that $\Gamma\cap O_{S}^{*}$ has rank at
most one, where $S=\Sigma_{K}\setminus\Sigma$. Then for every closed
$K$-variety $W$ in $\Affi^{M}$, we have that $W(\overline{\Gamma})=\overline{W(\Gamma)}$.
\end{thm}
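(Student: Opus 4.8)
The plan is to prove the nontrivial inclusion $W(\overline{\Gamma})\subseteq\overline{W(\Gamma)}$; the reverse one holds with no hypothesis on $W$, since every point of $W(\Gamma)$ lies, via the diagonal, in $W(\overline{\Gamma})$, and $W(\overline{\Gamma})$ is closed, being the intersection of the closed set $\overline{\Gamma}^{M}$ with the zero loci of the coordinatewise-continuous polynomials that define $W$ in $\Affi^{M}$. If $\Gamma\cap O_{S}^{*}$ is finite, then every non-torsion element of $\Gamma$ has nonzero valuation at some place of $\Sigma$, so $\overline{\Gamma}=\Gamma$ is discrete and the assertion is immediate; hence assume $\Gamma\cap O_{S}^{*}$ has rank exactly one. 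Let $\mu$ denote the (finite cyclic) torsion subgroup of $\Gamma$ and fix $\gamma$ with $\Gamma\cap O_{S}^{*}=\mu\cdot\la\gamma\ra$; then $\gamma$ is transcendental over $k$, as otherwise $\Gamma$ would be finite. Replacing $W$ by a coordinate-rescaled $K$-variety if necessary, we may assume $\Gamma=\mu\cdot\la\gamma\ra\subseteq O_{S}^{*}$, so that the only topological subtlety in $\overline{\Gamma}$ sits in a single pro-$p$, hence $\mathbb{Z}_{p}$-shaped, direction together with finite data.

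The first substantive step uses Proposition \ref{prop:old_red}. As the introduction indicates, its role here is not to lower the dimension of $W$ but to descend it: combining the adelic analogue of the Derksen--Masser theorem \cite{DM} recorded there with a decomposition of $W(\Gamma)$ according to which coordinates are forced to be constant, one reduces Conjecture \ref{conj: my} for an arbitrary $W$ over $K$ to the same statement for a $W$ defined over a smaller subfield $L\subseteq K$ — favorably, over the constant field $k^{\text{alg}}\cap K$ — with $\Gamma$ unchanged and no hypothesis imposed on $W$.

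The second and decisive step is to prove Conjecture \ref{conj: my} in this reduced situation under the rank-one hypothesis, generalizing Proposition 24 of \cite{Sun-const_hyper} by means of the elementary linear-algebra Lemma \ref{lem:lin_alg}. Given $\mathbf{x}=(\mathbf{x}_{v})_{v\in\Sigma}\in W(\overline{\Gamma})$ and a finite $T\subseteq\Sigma$, the task is to produce $\mathbf{y}\in W(\Gamma)$ that agrees with $\mathbf{x}$ to prescribed precision at every place of $T$. Since $\mathbf{x}\in\overline{\Gamma}^{M}$, each coordinate $x_{i,v}$ is a limit of elements $\zeta_{i}\gamma^{n}$ with $\zeta_{i}\in\mu$; using $O_{v}^{*}\cong\mathbb{F}_{v}^{*}\times(1+\mathfrak{m}_{v})$ with $1+\mathfrak{m}_{v}$ pro-$p$, matching the torsion and residue-field components at the finitely many places of $T$ pins down the $\zeta_{i}$ and a congruence class for each of the sought exponents $n_{1},\dots,n_{M}$, and matching the remaining pro-$p$ components becomes, after taking $p$-adic logarithms, a system of $\mathbb{Z}_{p}$-linear approximation conditions in $n_{1},\dots,n_{M}$. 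The requirement that $\mathbf{y}=(\zeta_{i}\gamma^{n_{i}})_{i}$ actually lie on $W$ — and here it is essential that $W$ be defined over the small field $L$, so that this is a rigid condition amenable to the Frobenius/specialization analysis of \cite{Sun-const_hyper} — cuts the admissible exponent vectors down to a coset of a $\mathbb{Z}_{p}$-submodule, and Lemma \ref{lem:lin_alg} is precisely the statement that such a coset is approximated by its integer points, which yields genuine exponents $n_{i}\in\mathbb{Z}$ and hence the desired $\mathbf{y}\in W(\Gamma)$.

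The hardest part, I expect, is this last step: arranging that the algebraic condition ``$\mathbf{y}\in W$'' and the topological condition ``$\mathbf{y}$ close to $\mathbf{x}$ throughout $T$'' are met by one and the same choice of exponents, with no intervening local-global obstruction. The rank-one hypothesis is indispensable here, since then the relevant exponent object is essentially a single copy of $\mathbb{Z}_{p}$ and the approximation is governed by one line, whereas for a higher-rank $\Gamma$ the exponent lattice is genuinely multidimensional and true obstructions occur — the phenomenon flagged in Remark \ref{rem:old_red} and in the failure, in general, of an adelic Mordell--Lang statement — so the present argument does not extend verbatim. A secondary difficulty is to carry out the descent of the first step tightly enough that the field $L$ is indeed small enough for the generalized Proposition 24 of \cite{Sun-const_hyper} to apply.
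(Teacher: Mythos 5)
Your high-level architecture matches the paper's (reduce to a variety over a smaller field using Proposition \ref{prop:old_red}, then settle the rank-one case by generalizing Proposition 24 of \cite{Sun-const_hyper} via Lemma \ref{lem:lin_alg}), but the two substantive steps are not actually carried out, and both are mischaracterized in ways that matter. First, the descent step does not land where you hope. What Proposition \ref{prop:old_red} yields (see Propositions \ref{cor:old_red} and \ref{prop:red}) is a subvariety defined over $k(\rho(\Gamma))$, i.e.\ essentially over $k(\gamma)$, and it comes at the price of replacing $\Gamma$ by a maximal free subgroup $\Phi$ of $\rho(\Gamma\cap O_S^*)$, after which one needs openness arguments (Corollary 2 of \cite{Sun-Skolem_Conj}, plus translating $W$ by points $\mathbf{r}\in\Affi^M(\Gamma)$ whose coordinates are \emph{not} in $k(\Phi)$) to transfer the approximation statement back to $\Gamma$; it is not a reduction ``to the constant field, with $\Gamma$ unchanged.'' A reduction to constant coefficients is not available in general, and this is exactly why the paper must generalize Proposition 24 of \cite{Sun-const_hyper}: the defining polynomials have coefficients in $k[\gamma]$, which is what produces the inhomogeneous terms $b_j$ (the exponents $d_0$ of $T$) in Lemma \ref{lem:lin_alg}.

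Second, and decisively, your key step is asserted rather than proved, and the model behind it is wrong. The closure of $\la\gamma\ra$ in $\prod_{v\in\Sigma}O_v^*$ is not ``one $\mathbb{Z}_p$-shaped direction plus finite data'': at each of the infinitely many $v\in\Sigma$ the residue component forces a congruence on the exponent modulo $\#\mathbb{F}_v^*$ (prime to $p$), and these congruences at infinitely many places are the essential part of the approximation data (compare the limit of $(T^{p^{n!}})$ in Remark \ref{rem:old_red}, whose coordinates are roots of unity of unbounded degree); in the paper this data is encoded by divisibility of $f_j(T^{e_{1,n}},\ldots,T^{e_{M,n}})$ by the polynomials $g_{a,b_0}(T)$, not by any $p$-adic logarithm. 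Moreover, the claim that the condition $(\zeta_i\gamma^{n_i})_i\in W$ cuts the exponent vectors down to ``a coset of a $\mathbb{Z}_p$-submodule'' is unjustified (the exponent set is in general a finite union of cosets, with $p$-power distortions in this characteristic-$p$ setting), and even granting some such structure it does not follow that an integral exponent vector exists close to the given adelic one. Bridging exactly this gap is the content of Proposition \ref{key}: Lemma \ref{lem:partition} converts the divisibility by $g_{a,b_0}(T)$ for infinitely many $a$ into a single partition of the monomials with vanishing coefficient sums and congruences among the exponents, and only after this linearization does Lemma \ref{lem:lin_alg} apply to produce exact integer exponents that remain close to the originals. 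Without an argument of this kind, your proposal assumes the theorem's hardest point rather than proving it.
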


\section{The proof of Theorem \ref{thm:main}\label{sec:The-proofs}}

For any subgroup $\Delta\subset K^{*}$, we denote by $k(\Delta)$
the smallest subfield of $K$ containing $k$ and $\Delta$, by $\rho(\Delta)$
the subgroup $\bigcap_{m\geq0}(K^{p^{m}})^{*}\Delta$ of $K^{*}$,
and by $\sqrt[^{K}]{\Delta}$ the subgroup $\{x\in K^{*}:x^{n}\in\Delta\text{ for some }n\in\mathbb{N}\}$
of $K^{*}$. The following result is a special case of Proposition
6 in \cite{Sun-Skolem_Conj}.
\begin{prop}
\label{prop:old_red}Let $d$ be the dimension of $W$. Suppose that
$W$ is a union of homogeneous linear $K$-varieties, and that each
$d$-dimensional irreducible component of $W$ is not $\rho(\Gamma)$-isotrivial.
Then there exists a finite union $V$ of homogeneous linear $K$-subvarieties
of $W$ with dimension smaller than $d$ such that $W(\overline{\Gamma_{v}})=V(\overline{\Gamma_{v}})$
for every $v\in\Sigma_{K}$; in particular, we have $W(\overline{\Gamma})=V(\overline{\Gamma})$. 
\end{prop}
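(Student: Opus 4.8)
The plan is to obtain Proposition \ref{prop:old_red} as the special case of Proposition 6 of \cite{Sun-Skolem_Conj} announced in the text; the work is to match hypotheses, read off the conclusion in the stated form, and then deduce the ``in particular'' clause, rather than to reprove the underlying $v$-adic Mordell--Lang input. First I would recall the framework of \cite{Sun-Skolem_Conj}: for a union $W$ of homogeneous linear $K$-varieties, that paper analyses $W(\Gamma)$ and the $v$-adic closures $W(\overline{\Gamma_{v}})$ using the Mordell--Lang theorem of Derksen and Masser \cite{DM} for the split torus $\Gm^{M}$ together with a $v$-adic approximation step. The appearance of $\rho(\Gamma)=\bigcap_{m\ge0}(K^{p^{m}})^{*}\Gamma$ rather than $\Gamma$ reflects the fact that, in characteristic $p$, Mordell--Lang is sensitive to the perfect closure, and $\rho(\Gamma)$ is the relevant $p$-saturation of $\Gamma$; so ``each $d$-dimensional irreducible component of $W$ is not $\rho(\Gamma)$-isotrivial'' is exactly the hypothesis under which no top-dimensional family of $\overline{\Gamma_{v}}$-points can survive.

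The reduction to a single irreducible $W$ of dimension $d$ is immediate: an irreducible component of dimension $<d$ is simply absorbed into $V$, every component of dimension $d$ is non-isotrivial by hypothesis, and a finite union of the $V$'s obtained for the top-dimensional components, together with the lower-dimensional components, serves as the required $V$. For one such component I would invoke Proposition 6 of \cite{Sun-Skolem_Conj}, checking that its hypotheses are met and that its conclusion can be taken in the form: there is a finite union $V$ of homogeneous linear $K$-subvarieties of $W$, each of dimension $<d$, with $W(\overline{\Gamma_{v}})=V(\overline{\Gamma_{v}})$ for every $v\in\Sigma_{K}$. (The reverse inclusion $V(\overline{\Gamma_{v}})\subset W(\overline{\Gamma_{v}})$ is automatic from $V\subset W$, so the content is $W(\overline{\Gamma_{v}})\subset V(\overline{\Gamma_{v}})$.)

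The ``in particular'' clause is then formal. Since $\overline{\Gamma}=\overline{\Gamma_{\Sigma}}\subset\prod_{v\in\Sigma}\overline{\Gamma_{v}}$, a point of $W(\overline{\Gamma})$ has, at every $v\in\Sigma$, all of its coordinates in $\overline{\Gamma_{v}}$; hence at every such $v$ its image lies in $W(\overline{\Gamma_{v}})=V(\overline{\Gamma_{v}})$, so it satisfies the defining equations of $V$ (which have coefficients in $K$) at every $v\in\Sigma$, hence in the product ring $\prod_{v\in\Sigma}K_{v}$ itself. Thus $W(\overline{\Gamma})\subset V(\overline{\Gamma})$, and the reverse inclusion holds because $V\subset W$.

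The one genuinely hard step — settled in \cite{Sun-Skolem_Conj} building on \cite{DM}, and only quoted here — is the $v$-adic approximation that promotes a description of the $\Gamma$-points of $W$ to a description of $W(\overline{\Gamma_{v}})$: one must rule out that passing to the topological closure $\overline{\Gamma_{v}}$ produces points of $W$ lying off the proper subvariety $V$. This is precisely where non-isotriviality with respect to $\rho(\Gamma)$, not merely with respect to $\Gamma$, is indispensable, and it is why Derksen--Masser's characteristic-$p$ Mordell--Lang, rather than a naive adaptation of the number-field argument, must be used.
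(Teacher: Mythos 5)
Your proposal matches the paper exactly: the paper gives no independent argument for Proposition \ref{prop:old_red}, stating only that it is a special case of Proposition 6 in \cite{Sun-Skolem_Conj}, which is precisely the citation you invoke, and your routine deduction of the ``in particular'' clause from $\overline{\Gamma}\subset\prod_{v\in\Sigma}\overline{\Gamma_{v}}$ together with the per-place equality $W(\overline{\Gamma_{v}})=V(\overline{\Gamma_{v}})$ is the intended (if unwritten) step. Nothing further is needed.
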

\begin{rem}
\label{rem:old_red}Proposition \ref{prop:old_red} is an analog for
$W(\overline{\Gamma})$ to the qualitative part of conclusion (a)
in Main Estimate of Section 9 in \cite{DM}. However, there is no
analog for $W(\overline{\Gamma})$ to the qualitative part of conclusion
(b). To be precise, we consider the example where $W=H_{f}\subset\Affi^{2}$
with $f(X_{1},X_{2})=X_{1}+X_{2}-1\in k[X_{1},X_{2}]$. Note that
$W$ is irreducible of dimension one. By the qualitative part of conclusion
(b), there is a finite set $\mathcal{V}$ of irreducible proper $K$-subvarieties
of $W$ such that $W\left(\sqrt[^{K}]{\Gamma}\right)=\bigcup_{V\in\mathcal{V}}\bigcup_{e\in\mathbb{N}\cup\{0\}}\left(V\left(\sqrt[^{K}]{\Gamma}\right)\right)^{p^{e}}$.
Considering the proper $K$-subvariety $Z_{0}=\bigcup_{V\in\mathcal{V}}V$
of $W$, we see that $\bigcup_{V\in\mathcal{V}}\left(V\left(\sqrt[^{K}]{\Gamma}\right)\right)^{p^{e}}=\left(Z\left(\sqrt[^{K}]{\Gamma}\right)\right)^{p^{e}}$
for each $e\in\mathbb{N}\cup\{0\}$; it follows that 
\[
W\left(\sqrt[^{K}]{\Gamma}\right)=\bigcup_{e\in\mathbb{N}\cup\{0\}}\left(Z\left(\sqrt[^{K}]{\Gamma}\right)\right)^{p^{e}}.
\]
Nevertheless, in the case where $K=\mathbb{F}_{p}(T)$ and $\Gamma=\{cT^{n}(1-T)^{m}:\,c\in\mathbb{F}_{p}^{*},\,(n,m)\in\mathbb{Z}^{2}\}=\sqrt[^{K}]{\Gamma}$
and $\Sigma$ is the maximal subset of $\Sigma_{K}$ such that $\Gamma\subset O_{v}^{*}$
for each $v\in\Sigma$, we claim that 
\[
W\left(\overline{\Gamma}\right)\not\subset\bigcup_{e\in\mathbb{N}\cup\{0\}}\left(Z\left(\overline{\Gamma}\right)\right)^{p^{e}}
\]
for every proper $K$-subvariety $Z$ of $W$. To see this, first
note that such $Z$ must have dimension zero. Moreover, the sequence
$(T^{p^{n!}})_{n\ge0}$ converges to some element $\alpha=(\alpha_{v})_{v\in\Sigma}\in\overline{\Gamma_{T}}\subset\overline{\Gamma}$,
where $\Gamma_{T}\subset\Gamma$ is the cyclic subgroup generated
by $T$. (Example 1 in \cite{Sun}) Similarly, we see that $1-\alpha\in\overline{\Gamma_{1-T}}\subset\overline{\Gamma}$,
where $\Gamma_{1-T}\subset\Gamma$ is the cyclic subgroup generated
by $T$. Thus we have that $(\alpha,1-\alpha)\in W(\overline{\Gamma})$.
Based on these facts, our claim can be proved by either of the two
following arguments.
\begin{enumerate}
\item Example 1 in \cite{Sun} also shows that $\alpha\notin K^{*}$, thus
$(\alpha,1-\alpha)\not\in W(K^{*})$. However, since $Z$ has dimension
zero, we have that $Z\left(\overline{\Gamma}\right)=Z\left(\Gamma\right)$.
(Proposition 2 in \cite{Sun-Skolem_Conj}) Because $Z\left(\Gamma\right)\subset Z(K^{*})\subset W(K^{*})$,
this proves our claim.
\item For each $v\in\Sigma$, note that $\alpha_{v}\in O_{v}^{*}$ and let
$P_{v}(T)\in\mathbb{F}_{p}[T]$ be the unique irreducible polynomial
such that $P_{v}(T)\in\mathfrak{m}_{v}$; thus we have that $P_{v}(T^{p^{n!}})=P_{v}(T)^{p^{n!}}\in\mathfrak{m}_{v}^{p^{n!}}$
for each $n\ge0$, which implies that $P_{v}(\alpha_{v})\in\cap_{n\ge0}\mathfrak{m}_{v}^{p^{n!}}=\{0\}$;
it follows that $P_{v}$ is the minimal polynomial for $\alpha_{v}$
over $\mathbb{F}_{p}$. On the other hand, for any $\alpha\in k^{\text{alg}}$
and any $e\in\mathbb{N}\cup\{0\}$, the element $\alpha^{p^{e}}$
is a zero of the minimal point for $\alpha$ over $\mathbb{F}_{P}$.
As $Z$ is a zero-dimensional $K$-variety, it follows that the degrees
of minimal polynomials of torsion points in $\left(Z\left(\overline{\Gamma_{v}}\right)\right)^{p^{e}}$
are uniformly bounded over all $(v,e)\in\Sigma\times(\mathbb{N}\cup\{0\}$).
Because the degree of $P_{v}$ can be arbitrarily large as $v$ ranges
over $\Sigma$, there must be some $v_{0}\in\Sigma$ such that $\alpha_{v_{0}}\notin\bigcup_{e\in\mathbb{N}\cup\{0\}}\left(Z\left(\overline{\Gamma_{v_{0}}}\right)\right)^{p^{e}}$;
since $Z\left(\overline{\Gamma}\right)\subset\prod_{v\in\Sigma}Z\left(\overline{\Gamma_{v}}\right)$,
this proves our claim.
\end{enumerate}
\end{rem}
\begin{prop}
\label{cor:old_red}For any closed $K$-variety $W\subset\Affi^{M}$,
there exists some closed $k(\rho(\Gamma))$-subvariety $V$ of $W$
such that $W(\overline{\Gamma_{v}})=V(\overline{\Gamma_{v}})$ for
every $v\in\Sigma_{K}$; in particular, we have $W(\overline{\Gamma})=V(\overline{\Gamma})$. 
\end{prop}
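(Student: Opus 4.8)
The plan is to deduce Proposition~\ref{cor:old_red} from Proposition~\ref{prop:old_red} in two stages. In the first stage I would prove it under the extra hypothesis that $W$ is a union of homogeneous linear $K$-varieties, by induction on $d:=\dim W$; in the second stage I would reduce the general case to the first one via a monomial parametrization.

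\emph{Stage one.} Suppose $W$ is a union of homogeneous linear $K$-varieties; since a linear subspace is irreducible, the irreducible components of $W$ are again homogeneous linear $K$-varieties. Sort the $d$-dimensional components according to $\rho(\Gamma)$-isotriviality and write $W=W_{0}\cup W_{1}$, where $W_{0}$ is the union of the $\rho(\Gamma)$-isotrivial ones and $W_{1}$ is the union of the remaining $d$-dimensional components together with all components of dimension $<d$; by the definition of isotriviality, $W_{0}$ is defined over $k(\rho(\Gamma))$. If $\dim W_{1}<d$, apply the inductive hypothesis to $W_{1}$; otherwise every $d$-dimensional irreducible component of $W_{1}$ is non-$\rho(\Gamma)$-isotrivial, so Proposition~\ref{prop:old_red} produces a union $V_{1}'$ of homogeneous linear $K$-subvarieties of $W_{1}$ with $\dim V_{1}'<d$ and $W_{1}(\overline{\Gamma_{v}})=V_{1}'(\overline{\Gamma_{v}})$ for every $v\in\Sigma_{K}$, to which the inductive hypothesis applies. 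Either way one obtains a closed $k(\rho(\Gamma))$-subvariety $V_{1}\subset W_{1}$ with $W_{1}(\overline{\Gamma_{v}})=V_{1}(\overline{\Gamma_{v}})$ for all $v$, and then $V:=W_{0}\cup V_{1}\subset W$ works, because forming $\overline{\Gamma_{v}}$-points commutes with finite unions. The base case $d\le 0$ is immediate.

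\emph{Stage two.} For general $W\subset\Affi^{M}$, since $\overline{\Gamma_{v}}\subset K_{v}^{*}$ every $\overline{\Gamma_{v}}$-point of $W$ has invertible coordinates, so $W(\overline{\Gamma_{v}})=(W\cap\Gm^{M})(\overline{\Gamma_{v}})$ and we work with $W\cap\Gm^{M}$. Fix finitely many Laurent polynomials cutting out $W\cap\Gm^{M}$ and let $\psi\colon\Gm^{M}\to\Gm^{N}$ be the monomial map whose components are the monomials occurring in them, enlarged by the constant monomial and the coordinate monomials $X_{1},\dots,X_{M}$ (so $\psi$ is injective); $\psi$ is defined over the prime field, as is its image subtorus $T:=\psi(\Gm^{M})$. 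Rewriting the chosen polynomials as linear forms in the coordinates of $\Gm^{N}$ yields a homogeneous linear $K$-variety $L\subset\Affi^{N}$ with $\psi^{-1}(L)=W\cap\Gm^{M}$. Because $\overline{\Gamma_{v}}$ is a subgroup of $K_{v}^{*}$ and the coordinate monomials are among the components of $\psi$, the map $\psi$ restricts to a bijection of $(W\cap\Gm^{M})(\overline{\Gamma_{v}})$ onto $L(\overline{\Gamma_{v}})\cap T$ for every $v$ — it lands there since products and inverses of elements of $\overline{\Gamma_{v}}$ stay in $\overline{\Gamma_{v}}$, and a point of $L(\overline{\Gamma_{v}})\cap T$ has $\psi$-preimage the point given by its coordinate-monomial entries, which lie in $\overline{\Gamma_{v}}$. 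Apply Stage one to $L$ to obtain a closed $k(\rho(\Gamma))$-subvariety $\tilde V\subset L$ with $\tilde V(\overline{\Gamma_{v}})=L(\overline{\Gamma_{v}})$ for all $v$, and set $V:=\psi^{-1}(\tilde V\cap T)$ (or its Zariski closure in $\Affi^{M}$). Then $V\subset W\cap\Gm^{M}\subset W$, the variety $V$ is defined over $k(\rho(\Gamma))$ since $\psi$ and $T$ are defined over the prime field, and $\psi$ carries both $(W\cap\Gm^{M})(\overline{\Gamma_{v}})$ and $V(\overline{\Gamma_{v}})$ onto $L(\overline{\Gamma_{v}})\cap T$; as $\psi$ is injective this forces $W(\overline{\Gamma_{v}})=V(\overline{\Gamma_{v}})$ for every $v\in\Sigma_{K}$. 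The final clause follows since $\overline{\Gamma}\subset\prod_{v\in\Sigma}\overline{\Gamma_{v}}$ and $V\subset W$.

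I expect the real obstacle to lie not in any single step above but in the characteristic-$p$ bookkeeping that makes them cohere over the precise field $k(\rho(\Gamma))$: one must pin down the definition of ``$\rho(\Gamma)$-isotrivial'' and verify that a $\rho(\Gamma)$-isotrivial homogeneous linear $K$-variety really is defined over $k(\rho(\Gamma))$ — this is exactly what forces that field rather than $k(\Gamma)$ — and one must check that forming $\psi^{-1}(\tilde V\cap T)$ and passing to reduced structures does not enlarge the field of definition past $k(\rho(\Gamma))$, a purely inseparable subtlety that the operator $\rho$ is designed to absorb. A minor point worth recording in Stage two is that the monomial map sends $\Gamma$-points to $\Gamma$-points — products and inverses of elements of $\Gamma$, and $1$, all lie in $\Gamma$ — so that no auxiliary subgroup of $K^{*}$, and hence no change of the field $k(\rho(\Gamma))$, is introduced.
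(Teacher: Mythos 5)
Your proposal is correct and follows essentially the same route as the paper: linearize $W$ by the monomial (Veronese-type) substitution, apply Proposition \ref{prop:old_red} iterated over the dimension to replace the resulting homogeneous linear variety by one all of whose components are $\rho(\Gamma)$-isotrivial, hence defined over $k(\rho(\Gamma))$, with the same $\overline{\Gamma_{v}}$-points, and pull back along the monomial substitution. Your Stage one merely makes explicit the dimension descent that the paper leaves implicit, and your $\psi^{-1}(\tilde{V}\cap T)$ is the same subvariety the paper obtains by substituting the monomials into the defining equations of $V'$.
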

\begin{proof}
Let $\{f_{j}:\,1\le j\le J\}\subset K[X_{1},\ldots,X_{M}]$ be a set
of polynomials defining $W$. Choose $D\in\mathbb{N}$ such that for
each $j\in\{1,\ldots,J\}$, we may write 
\[
f_{j}(X_{1},\ldots,X_{M})=\sum_{(d_{1},\cdots,d_{M})\in\{0,1,\cdots,D\}^{M}}c_{(j,d_{1},\cdots,d_{M})}X_{1}^{d_{1}}\cdots X_{M}^{d_{M}}
\]
with each $c_{(j,d_{1},\cdots,d_{M})}\in K$. Consider the tuple $\mathbf{Y}=(Y_{(d_{1},\cdots,d_{M})})_{(d_{1},\cdots,d_{M})\in\{0,1,\cdots,D\}^{M}}$
of new variables, in which we define linear forms 
\[
\ell_{j}(\mathbf{Y})=\sum_{(d_{1},\cdots,d_{M})\in\{0,1,\cdots,D\}^{M}}c_{(j,d_{1},\cdots,d_{M})}Y_{(d_{1},\cdots,d_{M})}
\]
for each $j\in\{1,\ldots,J\}$. Let $N=(D+1)^{M}$ and $W'\subset\mathbb{A}^{N}$
be the homogeneous linear variety defined by $\{\ell_{j}:\,1\le j\le J\}$.
By Proposition \ref{prop:old_red}, there exists a finite union $V'$
of homogeneous linear $K$-subvarieties of $W'$ such that each irreducible
component of $V'$ is $\rho(\Gamma)$-isotrivial and that $W'(\overline{\Gamma_{v}})=V'(\overline{\Gamma_{v}})$
for every $v\in\Sigma_{K}$. In particular, each irreducible component
of $V'$ is defined over $k(\rho(\Gamma))$, thus so is $V'$. Let
$\{g_{j}':\,1\le j\le J'\}\subset k(\rho(\Gamma))[\mathbf{Y}]$ be
a set of polynomials defining $V'$. For each $j\in\{1,\ldots,J'\}$,
we construct $f'_{j}(X_{1},\ldots,X_{M})$ by substituting each variable
$Y_{(d_{1},\cdots,d_{M})}$ in $g_{j}'(\mathbf{Y})$ by the monomial
$X_{1}^{d_{1}}\cdots X_{M}^{d_{M}}$, thus we have that $f'_{j}(X_{1},\ldots,X_{M})\in k(\rho(\Gamma))[X_{1},\ldots,X_{M}]$.
Let $V\subset\mathbb{A}^{M}$ be the $k(\rho(\Gamma))$-variety whose
vanishing ideal is generated defined by $\{f_{j}':\,1\le j\le J'\}$.
For every $j\in\{1,\ldots,J'\}$ and every $(x_{1},\ldots,x_{M})\in V(K^{\text{alg}})$,
we have $f'_{j}(x_{1},\ldots,x_{M})=0$, thus the point $(x_{1}^{d_{1}}\cdots x_{M}^{d_{M}})_{(d_{1},\cdots,d_{M})\in\{0,1,\cdots,D\}^{M}}\in\mathbb{A}^{N}(K^{\text{alg}})$
is a zero of $g_{j}'(\mathbf{Y})$ by construction; this shows that
$(x_{1}^{d_{1}}\cdots x_{M}^{d_{M}})_{(d_{1},\cdots,d_{M})\in\{0,1,\cdots,D\}^{M}}\in V'(K^{\text{alg}})\subset W'(K^{\text{alg}})$
and thus the construction yields $(x_{1},\ldots,x_{M})\in W(K^{\text{alg}})$.
Hence we see that $V\subset W$. Similar reasonings gives the other
desired conclusion that $W(\overline{\Gamma_{v}})=V(\overline{\Gamma_{v}})$
for every $v\in\Sigma_{K}$.
\end{proof}
For any finitely generated subgroup $\Delta\subset K^{*}$, Lemma
3 of \cite{axby} shows that $\rho(\Delta)\subset\sqrt[^{K}]{\Delta}$,
and thus that $\Delta$ and $\rho(\Delta)$ have the same rank; we
also note that $\Delta\subset\rho(\Delta)=\rho(\rho(\Delta))$ by
definition.
\begin{prop}
\label{prop:red}Letting $S=\Sigma_{K}\setminus\Sigma$, there exists
a free subgroup $\Phi\subset O_{S}^{*}$ which has the same rank as
$\Gamma\cap O_{S}^{*}$ and satisfies the following property: if $V(\overline{\Phi})=\overline{V(\Phi)}$
for every closed $k(\Phi)$-variety $V\subset\Affi^{M}$, then $W(\overline{\Gamma})=\overline{W(\Gamma)}$
for every closed $K$-variety $W\subset\Affi^{M}$.
\end{prop}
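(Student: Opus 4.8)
The plan is to take $\Phi$ inside $\rho(\Gamma\cap O_{S}^{*})$ and to chain three reductions that carry Conjecture~\ref{conj: my} for $\Phi$ over $k(\Phi)$-varieties back to Conjecture~\ref{conj: my} for $\Gamma$ over $K$-varieties. Set $\Gamma'=\Gamma\cap O_{S}^{*}$; it is finitely generated, being a subgroup of the finitely generated group $O_{S}^{*}$, and so is $\Gamma$. Let $r$ be the rank of $\Gamma'$. By Lemma~3 of \cite{axby}, $\rho(\Gamma')$ is finitely generated of rank $r$ with $\rho(\Gamma')\subset\sqrt[^{K}]{\Gamma'}$, and $\sqrt[^{K}]{\Gamma'}\subset O_{S}^{*}$ since $x^{n}\in O_{S}^{*}$ forces $x\in O_{S}^{*}$. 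Writing $\rho(\Gamma')$ as the direct product of its finite torsion subgroup (roots of unity of $K$, hence in $k$) with a free subgroup of rank $r$, I take $\Phi$ to be such a free complement; then $\Phi\subset O_{S}^{*}$ is free of rank $r=\operatorname{rank}(\Gamma\cap O_{S}^{*})$ and $k(\Phi)=k(\rho(\Gamma'))$, so $\Phi$ meets the stated requirements. Throughout I use, without comment, that the closure (always in $\prod_{v\in\Sigma}K_{v}^{*}$) of a subgroup is a subgroup, and that $\overline{V(\Delta)}\subset V(\overline{\Delta})$ always.

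The first reduction is a \emph{commensurability} principle: if $\Delta'\subset\Delta$ are subgroups of $K^{*}$ with $[\Delta:\Delta']<\infty$ and $F$ is a subfield of $K$ containing $k$ and $\Delta$, then Conjecture~\ref{conj: my} for $\Delta$ over all closed $F$-varieties holds if and only if it does for $\Delta'$. For ``$\Delta'\Rightarrow\Delta$'' I would write $\Delta=\bigsqcup_{i=1}^{N}\delta_{i}\Delta'$ and observe, for a closed $F$-variety $V$, that $V(\Delta)=\bigcup_{\mathbf j}\delta_{\mathbf j}\cdot V_{\mathbf j}(\Delta')$ and $V(\overline{\Delta})=\bigcup_{\mathbf j}\delta_{\mathbf j}\cdot V_{\mathbf j}(\overline{\Delta'})$, where $\mathbf j$ runs over $\{1,\dots,N\}^{M}$, $\delta_{\mathbf j}=(\delta_{j_{1}},\dots,\delta_{j_{M}})$, and $V_{\mathbf j}=\delta_{\mathbf j}^{-1}\cdot V$ is again a closed $F$-variety (using $\Delta\subset F$ and $\overline{\Delta}=\bigcup_{i}\delta_{i}\overline{\Delta'}$, as closure commutes with finite unions and with coordinatewise scaling); taking closures and applying the hypothesis for $\Delta'$ to each $V_{\mathbf j}$ finishes it. For ``$\Delta\Rightarrow\Delta'$'' the key point is that $\overline{\Delta'}$ is a closed, hence open, subgroup of $\overline{\Delta}$ of index at most $N$, and that a comparison of indices in the surjection $\Delta\twoheadrightarrow\overline{\Delta}/\overline{\Delta'}$ forces $\Delta\cap\overline{\Delta'}=\Delta'$; then for $x\in V(\overline{\Delta'})$, writing $x=\lim x^{(n)}$ with $x^{(n)}\in V(\Delta)$ (the hypothesis for $\Delta$), openness gives $x^{(n)}\in(\overline{\Delta'})^{M}$, hence $x^{(n)}\in V(\Delta')$, for all large $n$, so $x\in\overline{V(\Delta')}$.

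The second reduction \emph{descends the field of definition}: I would apply Proposition~\ref{cor:old_red} with $\Gamma$ replaced by $\Gamma'$---legitimate since Proposition~\ref{prop:old_red} is a special case of the general Proposition~6 of \cite{Sun-Skolem_Conj}---to get, for each closed $K$-variety $W$, a closed $k(\rho(\Gamma'))$-subvariety $V\subset W$ with $W(\overline{\Gamma'})=V(\overline{\Gamma'})$; the sandwich $\overline{V(\Gamma')}\subset\overline{W(\Gamma')}\subset W(\overline{\Gamma'})=V(\overline{\Gamma'})$ then shows that Conjecture~\ref{conj: my} for $\Gamma'$ over $k(\rho(\Gamma'))$-varieties implies it for $\Gamma'$ over $K$-varieties. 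Applying the commensurability principle over $F=k(\Phi)=k(\rho(\Gamma'))\supset\rho(\Gamma')$---first to the pair $\Phi\subset\rho(\Gamma')$ (index the size of the torsion) and then to the pair $\Gamma'\subset\rho(\Gamma')$ (finite index, as $\rho(\Gamma')/\Gamma'$ is finitely generated and, because $\rho(\Gamma')\subset\sqrt[^{K}]{\Gamma'}$, torsion)---I obtain: Conjecture~\ref{conj: my} for $\Phi$ over $k(\Phi)$-varieties $\Longrightarrow$ Conjecture~\ref{conj: my} for $\Gamma'$ over $K$-varieties.

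The third reduction passes from $\Gamma'$ back to $\Gamma$ by \emph{killing the valuation part}. Given a closed $K$-variety $W$ and a point $x=(x_{1},\dots,x_{M})\in W(\overline{\Gamma})$: the map $\pi=(\operatorname{ord}_{v})_{v\in\Sigma}\colon\prod_{v\in\Sigma}K_{v}^{*}\to\prod_{v\in\Sigma}\mathbb Z$ is continuous, and $\pi(\Gamma)$ is a finitely generated subgroup of $\bigoplus_{v\in\Sigma}\mathbb Z$, hence discrete, so $\pi(\overline{\Gamma})\subset\overline{\pi(\Gamma)}=\pi(\Gamma)$; the same continuity and discreteness give $\overline{\Gamma}\cap\prod_{v\in\Sigma}O_{v}^{*}\subset\overline{\Gamma'}$ (a sequence in $\Gamma$ converging to a point of $\ker\pi$ has all but finitely many terms in $\ker(\pi|_{\Gamma})=\Gamma'$). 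Hence for each $i$ I can choose $\gamma^{(i)}\in\Gamma$ with $\pi(\gamma^{(i)})=\pi(x_{i})$, whence $y_{i}:=x_{i}/\gamma^{(i)}\in\overline{\Gamma}\cap\prod_{v}O_{v}^{*}\subset\overline{\Gamma'}$; with $\gamma=(\gamma^{(i)})_{i}$ and the closed $K$-variety $W_{\gamma}=\gamma^{-1}\cdot W$, the point $y=(y_{i})_{i}$ lies in $W_{\gamma}(\overline{\Gamma'})$. By the second reduction $y=\lim y^{(n)}$ with $y^{(n)}\in W_{\gamma}(\Gamma')$, and then $\gamma\cdot y^{(n)}\in W(\Gamma)$ converges to $x$, so $x\in\overline{W(\Gamma)}$; with the trivial reverse inclusion this is Conjecture~\ref{conj: my} for $\Gamma$, closing the chain. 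I expect the main obstacle to be the two topological inputs---$\Delta\cap\overline{\Delta'}=\Delta'$ in the first reduction and $\overline{\Gamma}\cap\prod_{v\in\Sigma}O_{v}^{*}\subset\overline{\Gamma'}$ in the third---each of which rests on a continuous map of ``divisor-coordinate'' type (the quotient by $\overline{\Delta'}$, respectively $\pi$) having discrete image on the finitely generated group at hand.
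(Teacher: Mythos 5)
Your overall architecture is essentially the paper's: the same choice of $\Phi$ (a maximal free subgroup of $\rho(\Gamma\cap O_{S}^{*})$, so that $k(\Phi)=k(\rho(\Gamma\cap O_{S}^{*}))$), the same use of Proposition \ref{cor:old_red} to descend the field of definition, and finite-index/openness considerations to move between $\Phi$, $\rho(\Gamma\cap O_{S}^{*})$, $\Gamma\cap O_{S}^{*}$ and $\Gamma$. Several of your steps are correct as written: the direction ``Conjecture for the smaller group implies it for the bigger group'' via coset translates, the sandwich $\overline{V(\Gamma')}\subset\overline{W(\Gamma')}\subset W(\overline{\Gamma'})=V(\overline{\Gamma'})$, and the third reduction, where the relevant quotient embeds into $\bigoplus_{v\in\Sigma}\mathbb{Z}$, which really is discrete and closed in $\prod_{v\in\Sigma}\mathbb{Z}$, so that $\overline{\Gamma}\cap\prod_{v\in\Sigma}O_{v}^{*}\subset\overline{\Gamma\cap O_{S}^{*}}$ follows formally (this is the analogue of the paper's $S_{0}$-argument showing $\Gamma\cap O_{S}^{*}$ is open in $\Gamma$).

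The gap is in the direction ``$\Delta\Rightarrow\Delta'$'' of your commensurability principle, at the claim $\Delta\cap\overline{\Delta'}=\Delta'$. The surjection $\Delta\twoheadrightarrow\overline{\Delta}/\overline{\Delta'}$ has kernel $\Delta\cap\overline{\Delta'}\supset\Delta'$, so comparing indices only gives $[\overline{\Delta}:\overline{\Delta'}]\le[\Delta:\Delta']$, and the equality you would need is precisely equivalent to the claim itself; the argument is circular. Nor is the claim a formal consequence of ``finite index plus $\overline{\Delta'}$ closed, hence open, in $\overline{\Delta}$'': the index can collapse in the closure. For instance, with a single place $v$ and $\Delta=\langle\gamma\rangle$, $\Delta'=\langle\gamma^{2}\rangle$, whenever the reduction of $\gamma$ modulo $v$ has odd order and $2$ is invertible in the relevant procyclic closure, one gets $\overline{\Delta'}=\overline{\Delta}\ni\gamma$, so $\Delta\cap\overline{\Delta'}=\Delta\neq\Delta'$ (the classical example $\Delta=\langle2\rangle$, $\Delta'=\langle4\rangle$ in $\mathbb{Z}_{31}^{*}$ exhibits exactly this). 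So the statement you need--applied to the pair $\Gamma\cap O_{S}^{*}\subset\rho(\Gamma\cap O_{S}^{*})$--is genuinely arithmetic, resting on the cofiniteness of $\Sigma$; it is exactly what Corollary 2 of \cite{Sun-Skolem_Conj} provides (finite-index subgroups are open in the induced topology, hence closed, hence equal to their trace of the closure), and it is the result the paper invokes twice at the corresponding points of its own proof ($\Phi\cap\Gamma$ open in $\Gamma\cap O_{S}^{*}$, and $\Gamma$ open in $\Phi\Gamma$). Once you replace your index comparison by an appeal to (or a proof of) that corollary, the rest of your chain goes through.
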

\begin{proof}
Let $\Phi$ be a maximal free subgroup of the finitely generated abelian
group $\rho(\Gamma\cap O_{S}^{*})$. Since $\Phi\subset\rho(\Phi)\subset\rho(\rho(\Gamma\cap O_{S}^{*}))=\rho(\Gamma\cap O_{S}^{*})$,
it follows that $\Phi$ is a maximal free subgroup of $\rho(\Phi)$,
and this implies that $\rho(\Phi)=\text{Tor}(\rho(\Phi))\Phi=k^{*}\Phi$.
Letting $S_{0}\subset\Sigma_{K}$ be a finite subset such that $\Gamma\subset O_{S_{0}}^{*}$,
we see that the image of $\Gamma$ in $\prod_{v\in\Sigma}K_{v}^{*}$
is contained in $\left(\prod_{v\in\Sigma\cap S_{0}}K_{v}^{*}\right)\times\left(\prod_{v\in\Sigma\setminus S_{0}}O_{v}^{*}\right)$;
since $S=\Sigma_{K}\setminus\Sigma$, this shows that the image of
$\Gamma\cap O_{S}^{*}$ in $\prod_{v\in\Sigma}K_{v}^{*}$ is exactly
the intersection of the image of $\Gamma$ in $\prod_{v\in\Sigma}K_{v}^{*}$
with the open subgroup $\left(\prod_{v\in\Sigma\cap S_{0}}O_{v}^{*}\right)\times\left(\prod_{v\in\Sigma\setminus S_{0}}K_{v}^{*}\right)$
of $\prod_{v\in\Sigma}K_{v}^{*}$. It follows that $\Gamma\cap O_{S}^{*}$
is open in $\Gamma$. Since the index of $\Phi\cap\Gamma\cap O_{S}^{*}$
in $\Gamma\cap O_{S}^{*}$ is finite, Corollary 2 of \cite{Sun-Skolem_Conj}
shows that $\Phi\cap\Gamma\cap O_{S}^{*}$ is open in $\Gamma\cap O_{S}^{*}$,
and thus is open in $\Gamma$. We note that $\Phi\cap\Gamma\cap O_{S}^{*}=\Phi\cap\Gamma$
since $\Phi\subset O_{S}^{*}$.

Fix a closed $K$-variety $W\subset\Affi^{M}$. Consider an arbitrary
$\mathbf{x}\in W(\overline{\Gamma})$, which is the limit of a sequence
$\left(\mathbf{x}_{n}\right)_{n\in\mathbb{N}}$ in $\mathbb{A}^{M}(\Gamma)$.
Since $\Phi\cap\Gamma$ is open in $\Gamma$, we may assume that $\mathbf{x}_{n}=\mathbf{r}\mathbf{y}_{n}$
with some $\mathbf{r}\in\mathbb{A}^{M}(\Gamma)$ and a sequence $\left(\mathbf{y}_{n}\right)_{n\in\mathbb{N}}$
in $\mathbb{A}^{M}(\Phi\cap\Gamma)$. Note that the sequence $\left(\mathbf{y}_{n}\right)_{n\in\mathbb{N}}$
converges to $\mathbf{r}^{-1}\mathbf{x}\in(\mathbf{r}^{-1}W)(\overline{\Phi})$.
Recalling that $\rho(\Phi)=k^{*}\Phi$, Proposition \ref{cor:old_red}
says that there exist some closed $k(\Phi)$-subvariety $V$ of $\mathbf{r}^{-1}W$
such that $(\mathbf{r}^{-1}W)(\overline{\Phi})=V(\overline{\Phi})$.
Assuming $V(\overline{\Phi})=\overline{V(\Phi)}$, we see that $\mathbf{r}^{-1}\mathbf{x}\in(\mathbf{r}^{-1}W)(\overline{\Phi})=V(\overline{\Phi})=\overline{V(\Phi)}\subset\overline{(\mathbf{r}^{-1}W)(\Phi)}\subset\mathbf{r}^{-1}\left(\overline{W(\Phi)}\right)$,
i.e. $\mathbf{x}\in\overline{W(\Phi)}$ is the limit of some sequence
$\left(\mathbf{x}'_{n}\right)_{n\in\mathbb{N}}$ in $W(\Phi)$. Letting
$\left(\mathbf{x}''_{n}\right)_{n\in\mathbb{N}}\subset\mathbb{A}^{M}(\Phi\Gamma)$
be the sequence defined by $\mathbf{x}''_{2n-1}=\mathbf{x}_{n}$ and
$\mathbf{x}''_{2n}=\mathbf{x}'_{n}$, we see that the sequence $\left(\mathbf{x}''_{n}\right)_{n\in\mathbb{N}}\subset\mathbb{A}^{M}(\Phi\Gamma)$
is Cauchy. As abelian groups, $\Phi\Gamma/\Gamma$ is isomorphic to
$\Phi/(\Gamma\cap\Phi)$, which is finite by the construction of $\Phi$;
thus Corollary 2 of \cite{Sun-Skolem_Conj} shows that $\Gamma$ is
open in $\Phi\Gamma$. It follows that $\Phi\cap\Gamma$ is open in
$\Phi\Gamma$. Hence, for sufficiently large $n\in\mathbb{N}$, we
have that $(\mathbf{r}^{-1}\mathbf{x}_{n})^{-1}(\mathbf{r}^{-1}\mathbf{x}'_{n})=(\mathbf{x}''_{2n-1})^{-1}\mathbf{x}''_{2n}\in\mathbb{A}^{M}(\Phi\cap\Gamma)$;
since $\mathbf{r}^{-1}\mathbf{x}_{n}=\mathbf{y}_{n}\in\mathbb{A}^{M}(\Phi\cap\Gamma)$,
we conclude that $\mathbf{r}^{-1}\mathbf{x}'_{n}\in\mathbb{A}^{M}(\Phi\cap\Gamma)$
and thus $\mathbf{x}'_{n}\in\mathbb{A}^{M}(\Gamma)\cap W(\Phi)\subset W(\Gamma)$,
i.e. $\mathbf{x}\in\overline{W(\Gamma)}$. This finishes the proof.

\end{proof}
For any $a\in\mathbb{N}$ and $b\in\mathbb{N}\setminus p\mathbb{N}$,
consider the polynomial 
\[
g_{a,b}(T)=\frac{T^{ab}-1}{T^{a}-1}\in k[T].
\]

We make the following convention. For a polynomial $Q(T)\in k[T]$
and a rational function $P(T)\in k(T)$, we say that $Q(T)$ divides
$P(T)$ if any zero of $Q(T)$ in $k^{\text{alg}}$ is not a pole
of $\frac{P(T)}{Q(T)}$. The long proof of the following proposition,
which is the core in the proof of Theorem \ref{thm:main}, is postponed
to Section \ref{sec:The-Proof-of-key}. 
\begin{prop}
\label{key} Let $\mathcal{S}$ be a finite set of irreducible polynomials
in $k[T]$. Let $J$ be a natural number. For each $j\in\{1,\ldots,J\}$,
let $f_{j}(X_{1},\ldots,X_{M})\in k[T][X_{1},\ldots,X_{M}]$.

Assume that there exists a sequence $\{(e_{1,n},\ldots,e_{M,n})\}_{n\ge1}$
in $\mathbb{A}^{M}(\mathbb{Z})$ satisfying the following conditions:

For every $Q(T)\in k[T]$ not divisible by any element in $\mathcal{S}$,
there is an $N_{Q}\in\mathbb{N}$ such that for any $n\ge N_{Q}$
we have that $Q(T)$ divides $f_{j}(T^{e_{1,n}},\ldots,T^{e_{M,n}})$
for all $j\in\{1,\ldots,J\}$. 

Then there exists a sequence $\{(e'_{1,n},\ldots,e'_{M,n})\}_{n\in\mathcal{N}}$
in $\mathbb{A}^{M}(\mathbb{Z})$ indexed by an infinite subset $\mathcal{N}\subset\mathbb{N}$
with the following properties:

\begin{enumerate}
\item For each $n\in\mathcal{N}$ we have $f_{j}(T^{e'_{1,n}},\ldots,T^{e'_{M,n}})=0$
for all $j\in\{1,\ldots,J\}$.\label{enu:p1}
\item For every $\widetilde{Q}(T)\in k[T]$ not divisible by $T$, there
is an $\widetilde{N}_{\widetilde{Q}}\in\mathbb{N}$ such that for
any $n\in\mathcal{N}$ with $n\ge\widetilde{N}_{\widetilde{Q}}$ we
have that $\widetilde{Q}(T)$ divides $T^{e_{i,n}}-T^{e'_{i,n}}$
for all $i$.\label{enu:p2}
\end{enumerate}
\end{prop}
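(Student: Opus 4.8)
The plan is to recast both the hypothesis and the two desired properties as assertions about congruences between exponent vectors, using that $k^{\text{alg}}=\overline{k}$ is the union of the groups of roots of unity of order prime to $p$, and that divisibility of a Laurent polynomial by a cyclotomic‑type factor is a congruence condition on exponents.

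First I would set up the dictionary. Writing $f_j=\sum_{(\alpha,t)}c_{j,\alpha,t}T^{t}X^{\alpha}$ with $(\alpha,t)$ ranging over the finitely many atomic terms of $f_j$, one has for $e=(e_1,\dots,e_M)\in\mathbb A^{M}(\mathbb Z)$ that $f_j(T^{e_1},\dots,T^{e_M})=\sum_{v}\bigl(\sum_{(\alpha,t):\,t+\langle\alpha,e\rangle=v}c_{j,\alpha,t}\bigr)T^{v}$, so $f_j(T^{e})=0$ exactly when for every $v$ the coefficients of $f_j$ sum to $0$ over the fibre of $(\alpha,t)\mapsto t+\langle\alpha,e\rangle$ above $v$. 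For a root of unity $\zeta$ of order $m$ prime to $p$, $f_j(\zeta^{e_1},\dots,\zeta^{e_M})=\sum_{w\in\mathbb Z/m}\zeta^{w}\gamma_{j,w}(e)$ with $\gamma_{j,w}(e):=\sum_{(\alpha,t):\,t+\langle\alpha,e\rangle\equiv w\,(m)}c_{j,\alpha,t}$; since the characters $\zeta\mapsto\zeta^{w}$ of $\mu_m$ are linearly independent, for $a\mid m$ the polynomial $g_{a,m/a}(T)=\prod_{\zeta\in\mu_m,\ \mathrm{ord}(\zeta)\nmid a}(T-\zeta)$ divides $f_j(T^{e})$ if and only if $f_j(T^e)\equiv h(T)g_{a,m/a}(T)\pmod{T^m-1}$ for some $h$ with $\deg h<a$, i.e.\ iff $w\mapsto\gamma_{j,w}(e)$ is periodic with period $a$ (and, since $g_{a,b}(T)^{p^{c}}=g_{ap^{c},b}(T)$, the version with $p^{c}$‑fold multiplicities is obtained by also differentiating $f_j(T^e)$, which produces in addition congruences on $e$ modulo $p^{c}$). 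Dually, a $\widetilde Q\in k[T]$ prime to $T$ with roots $\eta$ of orders $d_\eta$ and multiplicities $\mu_\eta$ divides $T^{a}-T^{b}$ iff $\mathrm{lcm}_\eta d_\eta$ and $\max_\eta p^{\lceil\log_{p}\mu_\eta\rceil}$ both divide $a-b$, whence property $(\ref{enu:p2})$ is equivalent to $e_{i,n}-e'_{i,n}\to 0$ in $\widehat{\mathbb Z}$ for every $i$.

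Next I would analyse the integral zero set $\mathcal Z=\{e\in\mathbb A^{M}(\mathbb Z):f_j(T^{e})=0\text{ for all }j\}$ and its reductions. By the dictionary, vanishing of $f_j(T^{e})$ depends only on the partition of the atomic terms of $f_j$ into fibres of $(\alpha,t)\mapsto t+\langle\alpha,e\rangle$; a tuple $\Pi$ of such partitions is realised on an affine $\mathbb Q$‑subspace $A_{\Pi}$ cut out by the block‑mate equations $\langle\alpha-\alpha',\cdot\rangle=t'-t$ (intersected with the complement of finitely many affine hyperplanes), and passing to a more special stratum only coarsens the partition, hence preserves the condition that every block has coefficient‑sum $0$; so $\mathcal Z=\bigcup_{\Pi\text{ good}}\bigl(A_{\Pi}\cap\mathbb A^{M}(\mathbb Z)\bigr)$ is a finite union of cosets $v_{\Pi}+L_{\Pi}$ of saturated subgroups $L_{\Pi}\subset\mathbb A^{M}(\mathbb Z)$, all depending only on the $f_j$. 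Now fix $a_0$ divisible by the order of every root of unity that is a root of an element of $\mathcal S$; then $g_{a,b}(T)$ is coprime to $\mathcal S$ and prime to $T$ for every multiple $a$ of $a_0$ and every $b$ prime to $p$. Applying the hypothesis to these polynomials (and to their $p^{c}$‑th powers) gives: for all such $a,b,c$ there is $N$ with, for $n\ge N$, $w\mapsto\gamma_{j,w}(e_n)$ periodic of period $a$ at level $ab$ together with the $p^{c}$‑derivative refinements, for all $j$. Comparing with the first step, once $b$ exceeds the (fixed) number of atomic terms of every $f_j$ the factor $h$ above is forced to vanish, so $\gamma_{j,w}(e_n)=0$ for all $w$ and all derivative analogues; and then, for $ab p^{c}$ above the fixed bounds coming from $\mathcal S$ and from the coefficients of the linear systems defining the $A_{\Pi}$, this forces $e_n\bmod abp^{c}$ to lie in the reduction of $\mathcal Z$ modulo $abp^{c}$, i.e.\ in $\bigcup_{\Pi}(v_{\Pi}+L_{\Pi})\bmod abp^{c}$.

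Finally I would extract the subsequence. Choose a chain $m_1\mid m_2\mid\cdots\to\infty$ among the admissible moduli $abp^{c}$, arranged so that every integer divides all sufficiently large $m_k$, and let $k(n)$ be the largest $k$ for which the hypothesis already forces the conclusion of the previous step at level $m_k$, so $k(n)\to\infty$. For each $n$ there is, by that step, a coset $v_{\Pi}+L_{\Pi}$ with $e_n\bmod m_{k(n)}\in(v_{\Pi}+L_{\Pi})\bmod m_{k(n)}$; since the cosets are finite in number, the pigeonhole principle produces an infinite $\mathcal N\subset\mathbb N$ and a single coset $C=v_0+L_0$ with $L_0$ saturated such that $e_n\bmod m_{k(n)}\in C\bmod m_{k(n)}$ for all $n\in\mathcal N$. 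Saturation of $L_0$ gives $C\bmod m_{k(n)}=v_0+(L_0\bmod m_{k(n)})$, which is precisely the hypothesis of the elementary linear‑algebra Lemma~\ref{lem:lin_alg}; it yields $\ell_n\in L_0$ with $e_n\equiv v_0+\ell_n\pmod{m_{k(n)}}$, saturation being exactly what prevents spurious denominators. Setting $e'_n:=v_0+\ell_n\in C\subset\mathcal Z$ gives property $(\ref{enu:p1})$ for all $n\in\mathcal N$, while $e_n\equiv e'_n\pmod{m_{k(n)}}$ with $m_{k(n)}\to\infty$ through a cofinal family of moduli gives $e_{i,n}-e'_{i,n}\to 0$ in $\widehat{\mathbb Z}$, which by the first step is property $(\ref{enu:p2})$.

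The step I expect to be the main obstacle is the comparison in the third paragraph — showing that the large $\mathcal S$‑coprime divisors $g_{a,b}(T)^{p^{c}}$ forced by the hypothesis can only divide $f_j(T^{e})$ when $e\bmod abp^{c}$ already comes from $\mathcal Z$. The difficulties are that one is forced to use $g_{a,b}$ in place of $T^{m}-1$, since the hypothesis controls only divisors coprime to $\mathcal S$, so the roots of unity of order dividing $a$ must be excluded and the remainder $h$ must be eliminated; and that one must deal with characteristic‑$p$ phenomena — multiplicities, small Frobenius orbits, and coefficients such as the image of $m$ in $\mathbb F_p$ — which is why this proposition is isolated and its proof is lengthy.
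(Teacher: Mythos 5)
Your architecture is essentially the paper's: your root-of-unity/periodicity dictionary is exactly the content of Lemma \ref{lem:partition} (which the paper imports from \cite{Sun-const_hyper} as a black box), your pigeonhole over the finitely many good partitions/cosets and your chain of moduli cofinal for divisibility correspond to the paper's factorial set $\mathcal{A}$, and the passage from approximate to exact exponent vectors is Lemma \ref{lem:lin_alg}. Also, the characteristic-$p$ issues you worry about at the end are handled without any differentiation: since $g_{ap^{c},b}=g_{a,b}^{p^{c}}$ and your moduli are highly $p$-divisible, the multiplicity conditions are already encoded in divisibility by $g_{a,b}$ for $p$-divisible $a$.

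There is, however, a genuine misstep in your third and fourth paragraphs. What the hypothesis gives you at index $n$ and modulus $N=abp^{c}$ is only that $e_{n}$ satisfies the block congruences $\langle\alpha-\alpha',e_{n}\rangle\equiv t'-t\pmod{N}$ of some good partition; it does \emph{not} force $e_{n}\bmod N$ to lie in the reduction of $\mathcal{Z}$ modulo the same $N$, and taking $N$ large does not repair this. Concretely, take $M=1$, $f=X_{1}^{2}-T^{2}$, $\mathcal{S}=\{T\}$, $e_{n}=1+n!/2$: every $Q$ prime to $T$ eventually divides $f(T^{e_{n}})=T^{2}(T^{n!}-1)$, and the fibre congruence $2e_{n}\equiv2$ holds modulo $n!$, yet $\mathcal{Z}=\{1\}$ and $e_{n}\not\equiv1\pmod{n!}$; so at the diagonal pairs $(n,m_{k(n)})$ that your argument uses, the claimed membership fails. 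Saturation of $L_{0}$ does not rescue it, because the congruences you actually possess come from the original (unsaturated) coefficient matrix, and the loss is governed by its elementary divisors. Relatedly, membership of $e_{n}\bmod m$ in $C\bmod m$ is not ``precisely the hypothesis'' of Lemma \ref{lem:lin_alg}: the hypothesis of that lemma is the congruence system itself, and its conclusion only yields $e_{n}\equiv e'_{n}$ modulo $m/n_{0}$ for a fixed $n_{0}$ (in the example, $n_{0}=2$). The good news is that this weaker congruence is all you need: after pigeonholing the partition so that the linear system is fixed, Lemma \ref{lem:lin_alg} gives exact solutions $e'_{n}\in\mathcal{Z}$ with $e_{n}\equiv e'_{n}\pmod{m_{k(n)}/n_{0}}$, and since $m_{k(n)}/n_{0}$ still runs cofinally in the divisibility order, $e_{i,n}-e'_{i,n}\to0$ in $\widehat{\mathbb{Z}}$ and Property (\ref{enu:p2}) follows, while exactness gives Property (\ref{enu:p1}). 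With the same-modulus claim replaced by this $m/n_{0}$ statement and the lemma applied to the congruence system rather than to coset membership, your argument becomes the paper's proof.
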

The next theorem, which follows formally from Proposition \ref{key},
is proved in the same way that in \cite{Sun-const_hyper} Theorem
25 is formally deduced from Proposition 24.
\begin{thm}
\label{adelic}Let $W$ be a closed $k(\Gamma)$-variety in $\Affi^{M}$.
Suppose that $\Gamma$ is free with rank one, and that is contained
in $O_{S}^{*}$, where $S=\Sigma_{K}\setminus\Sigma$. Then we have
that $W(\overline{\Gamma})=\overline{W(\Gamma)}$. 
\end{thm}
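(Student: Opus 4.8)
The plan is to deduce Theorem \ref{adelic} from Proposition \ref{key} by the same formal argument that produces Theorem 25 from Proposition 24 in \cite{Sun-const_hyper}. First I would reduce to the case $K = k(\Gamma)$ and $\Gamma = \Gamma_T$, the cyclic group generated by a single element $T$, by fixing a generator $\gamma$ of the rank-one free group $\Gamma$ and setting $K_0 = k(\gamma)$, which is a rational function field in the transcendental element $\gamma$ (the transcendence follows from $\Gamma$ being free of rank one and $\Gamma \subset O_S^*$). Writing the defining polynomials of $W$ as $f_j \in k[\gamma][X_1,\ldots,X_M] = k[T][X_1,\ldots,X_M]$ under $T \mapsto \gamma$, a point of $W(\Gamma)$ is exactly a tuple $(\gamma^{e_1},\ldots,\gamma^{e_M})$ with $f_j(\gamma^{e_1},\ldots,\gamma^{e_M}) = 0$ for all $j$, i.e.\ $f_j(T^{e_1},\ldots,T^{e_M}) = 0$ in $k[T]$.

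Next I would unwind what convergence in $\overline{\Gamma}$ means place by place. The inclusion $\Gamma \subset O_S^*$ and the description of $\overline{\Gamma}$ inside $\prod_{v\in\Sigma} K_v^*$ mean that a point $\mathbf{x} \in W(\overline{\Gamma})$ is a limit of a sequence $\mathbf{x}_n = (\gamma^{e_{1,n}},\ldots,\gamma^{e_{M,n}})$ in $W(\Gamma)$; Cauchyness at a place $v \in \Sigma$ corresponding to an irreducible polynomial $P_v(T) \in k[T]$ says precisely that for each power $P_v^N$, eventually $P_v^N$ divides $\gamma^{e_{i,n}} - \gamma^{e_{i,m}}$ for all $i$, and similarly at the finitely many places not of this form. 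The key point is that this Cauchy data, together with $f_j(\mathbf{x}) = 0$ being detectable modulo arbitrarily high powers of each $P_v$ (since $\mathbf{x}$ lies on $W$ over the completions), translates exactly into the hypothesis of Proposition \ref{key}: taking $\mathcal{S}$ to be the (finite) set of irreducible polynomials arising from places where $\mathbf{x}$ fails to be already integral or where extra care is needed, for every $Q(T)$ coprime to $\mathcal{S}$ the polynomial $Q(T)$ divides each $f_j(T^{e_{1,n}},\ldots,T^{e_{M,n}})$ once $n$ is large. I would need to check carefully that the finitely many ``bad'' places can indeed be absorbed into $\mathcal{S}$, and that the place $T = 0$ (corresponding to $\widetilde{Q}$ not divisible by $T$ in condition \eqref{enu:p2}) is the one place we are allowed to lose control of — but this is exactly the asymmetry built into the statement of Proposition \ref{key}.

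Then I would apply Proposition \ref{key} to obtain a sequence $(e'_{1,n},\ldots,e'_{M,n})_{n\in\mathcal{N}}$ with $f_j(T^{e'_{1,n}},\ldots,T^{e'_{M,n}}) = 0$ for all $j$ — so that $\mathbf{x}'_n := (\gamma^{e'_{1,n}},\ldots,\gamma^{e'_{M,n}}) \in W(\Gamma)$ — and with the property that for every $\widetilde{Q}(T)$ coprime to $T$, eventually $\widetilde{Q}(T)$ divides $T^{e_{i,n}} - T^{e'_{i,n}}$, hence $\widetilde{Q}(\gamma)$ divides $\gamma^{e_{i,n}} - \gamma^{e'_{i,n}}$. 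Property \eqref{enu:p2} gives convergence of $\mathbf{x}'_n$ to the same limit $\mathbf{x}$ at every place $v \in \Sigma$ other than possibly the one attached to $T$; I would argue that this place either lies in $S$ (so is excluded from $\Sigma$) or can be handled because it corresponds to a fixed place and the loss of a single place does not affect density after passing to the cofinite $\Sigma$ — here one invokes that $\overline{\Gamma} \subset \prod_{v\in\Sigma}\overline{\Gamma_v}$ and that the convergence at each individual relevant place is what matters. Concluding, $\mathbf{x} \in \overline{W(\Gamma)}$, which with the trivial inclusion $\overline{W(\Gamma)} \subset W(\overline{\Gamma})$ finishes the proof.

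The main obstacle I expect is the bookkeeping in matching the topological convergence in $\prod_{v\in\Sigma} K_v^*$ with the divisibility conditions in Proposition \ref{key}: one must be scrupulous about which finitely many places get thrown into $\mathcal{S}$, about why the single distinguished place $T$ in conditions \eqref{enu:p1}--\eqref{enu:p2} is harmless (it should be either a place in $S$, hence outside $\Sigma$, or a place at which we can separately verify convergence), and about the fact that $f_j(\mathbf{x}) = 0$ holds in each completion $K_v$ rather than in $K$ itself, so that ``$\mathbf{x}$ lies on $W$'' must be read as a collection of congruences modulo powers of the $P_v$. None of these steps is deep, but the translation has to be done with care, exactly as in the passage from Proposition 24 to Theorem 25 in \cite{Sun-const_hyper}.
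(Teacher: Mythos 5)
Your proposal follows essentially the same route as the paper's proof: fix a generator $\gamma$, transport everything through the $k$-isomorphism $k(T)\cong k(\gamma)$, take $\mathcal{S}$ to be the finitely many irreducible polynomials of $k[T]$ not of the form $P_{v}$ for $v\in\Sigma$, translate convergence in $\prod_{v\in\Sigma}k(\gamma)_{v}$ into the divisibility hypothesis of Proposition \ref{key}, and translate its conclusion back to get a sequence in $W(\Gamma)$ converging to the given point. The one hedge you leave open is settled as in the paper: since $\Gamma\subset O_{S}^{*}$, every $v\in\Sigma$ satisfies $\gamma\in O_{v}^{*}$, so the place of $k(\gamma)$ attached to $T$ lies only under places of $S$ and hence $T\in\mathcal{S}$ automatically, which is the correct (and only valid) branch of your disjunction --- the fallback that ``losing one place of $\Sigma$ does not affect density'' would not be legitimate, since $\Sigma$ is fixed in the statement.
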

\begin{proof}
Let $\gamma$ be a generator of $\Gamma$. Let $\Sigma|_{k(\gamma)}\subset\Sigma$
be the subset satisfying the following property that for each $v\in\Sigma$
there exists a unique $w\in\Sigma|_{k(\gamma)}$ such that both $v$
and $w$ restrict to the same place of $k(\gamma)$. Consider the
$k$-isomorphism between fields 
\begin{equation}
k(T)\rightarrow k(\gamma),\,\,T\mapsto\gamma.\label{eq:iso}
\end{equation}
Through the isomorphism (\ref{eq:iso}), the set $\Sigma|_{k(\gamma)}$
is injectively mapped onto a subset of the set of places of $k(T)$.
For each $v\in\Sigma|_{k(\gamma)}$, we have that $\gamma\in O_{v}^{*}$;
let $P_{v}(T)\in k[T]$ be the irreducible polynomial corresponding
to the image of $v$ under this map. Let $\mathcal{S}$ be the complement
of the subset $\{P_{v}(T):\,v\in\Sigma|_{k(\gamma)}\}$ of the set
of all irreducible polynomials in $k[T]$. Note that $\mathcal{S}$
is a finite set containing the polynomial $T$, and that $k[\Gamma]\subset\prod_{v\in\Sigma}O_{v}$,
where $k[\Gamma]$ is the smallest subring of $K$ containing both
$k$ and $\Gamma$. 

Write $W=\bigcap_{j=1}^{J}H_{f_{j}}$, where $f_{j}(X_{1},\ldots,X_{M})\in k[\gamma][X_{1},\ldots,X_{M}]$
for each $j$. Let $\{(\gamma^{e_{1,n}},\ldots,\gamma^{e_{M,n}})\}_{n\ge1}$
be a sequence in $\mathbb{A}^{M}(\Gamma)$ which converges to a point
$(x_{1},\ldots,x_{M})\in W(\overline{\Gamma})\subset\mathbb{A}^{M}\left(\prod_{v\in\Sigma}K_{v}^{*}\right)$,
where $e_{i,n}\in\mathbb{Z}$. In fact, this sequence lies in the
image of $\mathbb{A}^{M}\left(\prod_{v\in\Sigma|_{k(\gamma)}}k(\gamma)_{v}^{*}\right)$
in $\mathbb{A}^{M}\left(\prod_{v\in\Sigma}K_{v}^{*}\right)$ under
the natural map, where $k(\gamma)_{v}$ denotes the topological closure
of the subfield $k(\gamma)$ in $K_{v}$. Note that this image is
a closed subset. The topology on $\overline{\Gamma}$ is induced from
the usual product topology on $\prod_{v\in\Sigma}k(\gamma)_{v}^{*}$,
and the latter topology is the same as the subspace topology restricted
from the usual product topology on $\prod_{v\in\Sigma}k(\gamma)_{v}$.
Thus for each $i\in\{1,\ldots,M\}$ the sequence $\left(\gamma^{e_{i,n}}\right)_{n\ge1}$
converges to $x_{i}$ in $\prod_{v\in\Sigma}k(\gamma)_{v}$ Therefore,
from the continuity of each $f_{j}$ at $(x_{1},\ldots,x_{M})\in\mathbb{A}^{M}\left(\prod_{v\in\Sigma}k(\gamma)_{v}\right)$,
we see that each sequence $\left(f_{j}(\gamma^{e_{1,n}},\ldots,\gamma^{e_{M,n}})\right)_{n\ge1}$
converges to $f_{j}(x_{1},\ldots,x_{M})=0$ in $\prod_{v\in\Sigma}k(\gamma)_{v}$.
Consider the sequence $\{(e_{1,n},\ldots,e_{M,n})\}_{n\ge1}$ in $\mathbb{A}^{M}(\mathbb{Z})$.
Fix an arbitrary $Q(T)\in k[T]$ not divisible by any element in $\mathcal{S}$.
Thus we have the prime decomposition $Q(T)=\prod_{v\in\Sigma|_{k(\gamma)}}P_{v}(T)^{n_{v}}$
in $k[T]$, where there are only finitely many $v\in\Sigma|_{k(\gamma)}$
with $n_{v}>0$. In particular, 
\[
U_{Q}=\prod_{\begin{array}{c}
v\in\Sigma|_{k(\gamma)}\\
n_{v}=0
\end{array}}k(\gamma)_{v}\times\prod_{\begin{array}{c}
v\in\Sigma|_{k(\gamma)}\\
n_{v}>0
\end{array}}(\mathfrak{m}_{v}\cap k(\gamma)_{v})^{n_{v}}
\]
is an open subset in $\prod_{v\in\Sigma|_{k(\gamma)}}k(\gamma)_{v}$
endowed with the the product topology. Note that $f_{j}(\gamma^{e_{1,n}},\ldots,\gamma^{e_{M,n}})\in k[\gamma,\gamma^{-1}]$
for each $j\in\{1,\ldots,J\}$ and $n\in\mathbb{N}$. The intersection
of $U_{Q}$ with the image of $k[\gamma,\gamma^{-1}]$ in $\prod_{v\in\Sigma|_{k(\gamma)}}k(\gamma)_{v}$
is the image of $Q[\gamma]k[\gamma,\gamma^{-1}]$, which is thus an
open subset of $k[\gamma,\gamma^{-1}]$ containing zero with respect
to the subspace topology restricted from $\prod_{v\in\Sigma|_{k(\gamma)}}k(\gamma)_{v}$.
Therefore, from the fact each sequence $\left(f_{j}(\gamma^{e_{1,n}},\ldots,\gamma^{e_{M,n}})\right)_{n\ge1}$
converges to zero in $\prod_{v\in\Sigma}k(\gamma)_{v}$, it follows
that there is an $N_{Q}\in\mathbb{N}$ such that for any $n\ge N_{Q}$
we have that $f_{j}(\gamma^{e_{1,n}},\ldots,\gamma^{e_{M,n}})\in Q[\gamma]k[\gamma,\gamma^{-1}]$
for each $j\in\{1,\ldots,J\}$; thus by the isomorphism (\ref{eq:iso})
we have that $Q(T)$ divides $f_{j}(T^{e_{1,n}},\ldots,T^{e_{M,n}})$,
because $0$ is not a zero of $Q(T)$. Therefore the assumption of
Proposition \ref{key} is verified. Applying the isomorphism (\ref{eq:iso})
to the conclusion of Proposition \ref{key}, we see that there exists
a sequence $\{(e'_{1,n},\ldots,e'_{M,n})\}_{n\in\mathcal{N}}$ in
$\mathbb{A}^{M}(\mathbb{Z})$ indexed by an infinite subset $\mathcal{N}\subset\mathbb{N}$
satisfying the following properties: for each $n\in\mathcal{N}$ we
have $f_{j}(\gamma^{e'_{1,n}},\ldots,\gamma^{e'_{M,n}})=0$ for all
$j\in\{1,\ldots,J\}$, and for every $\widetilde{Q}(T)\in k[T]$ not
divisible by $T$, there is an $\widetilde{N}_{\widetilde{Q}}\in\mathbb{N}$
such that for any $n\in\mathcal{N}$ with $n\ge\widetilde{N}_{\widetilde{Q}}$
we have that $\gamma^{e_{i,n}}-\gamma^{e'_{i,n}}\in\widetilde{Q}(\gamma)k[\gamma,\gamma^{-1}]$
for all $i\in\{1,\ldots,M\}$. The first property says that $(\gamma^{e'_{1,n}},\ldots,\gamma^{e'_{M,n}})\in W(\Gamma)$
for each $n\in\mathcal{N}$. On the other hand, because the image
of $k[\gamma,\gamma^{-1}]$ in $\prod_{v\in\Sigma|_{k(\gamma)}}k(\gamma)_{v}$
lies in $\prod_{v\in\Sigma|_{k(\gamma)}}(O_{v}\cap k(\gamma)_{v})$,
one may argue similarly as above that the topology on $k[\gamma,\gamma^{-1}]$,
which is induced from the usual product topology on $\prod_{v\in\Sigma}k(\gamma)_{v}$,
is generated by those subset $\widetilde{Q}(\gamma)k[\gamma,\gamma^{-1}]$
with $\widetilde{Q}(T)\in k[T]$ not divisible by any element in the
set $\mathcal{S}$. Since $\mathcal{S}$ contains the polynomial $T$,
the second property implies that for each $i\in\{1,\ldots,M\}$ the
sequence $(\gamma^{e_{i,n}}-\gamma^{e'_{i,n}})_{n\in\mathcal{N}}$
converges to zero in $\prod_{v\in\Sigma}k(\gamma)_{v}$; this shows
that the two sequences $(\gamma^{e_{i,n}})_{n\in\mathcal{N}}$ and
$(\gamma^{e'_{i,n}})_{n\in\mathcal{N}}$ converge to the same element
in $\prod_{v\in\Sigma}k(\gamma)_{v}$. Hence, for each $i\in\{1,\ldots,M\}$,
the sequence $(\gamma^{e'_{i,n}})_{n\in\mathcal{N}}$ converges to
$x_{i}$ in $\prod_{v\in\Sigma}k(\gamma)_{v}$; since $x_{i}\in\prod_{v\in\Sigma}k(\gamma)_{v}^{*}$,
it follows from what is explained above that the same convergence
also happens in $\prod_{v\in\Sigma}k(\gamma)_{v}^{*}$. This shows
that $(x_{1},\ldots,x_{M})\in\overline{\{(\gamma^{e'_{1,n}},\ldots,\gamma^{e'_{M,n}})\}_{n\in\mathcal{N}}}\subset\overline{W(\Gamma)}$,
which completes the proof. 
\end{proof}

\begin{proof}[Proof of Theorem \ref{thm:main}]
Combine Proposition \ref{prop:red} and Theorem \ref{adelic}.
\end{proof}

\section{The Proof of Proposition \ref{key}\label{sec:The-Proof-of-key}}

The following result is proved in the author's recent work \cite{Sun-const_hyper}.
\begin{lem}
\label{lem:partition}Let $f(T)=\sum_{i\in I}c_{i}T^{e_{i}}\in k(T)$
with each $c_{i}\in k$ and \emph{$e_{i}\in\mathbb{Z}$}, where $I$
is a finite index set. Let $a\in\mathbb{N}$, $b\in\mathbb{N}\setminus p\mathbb{N}$
with $b$ greater than the cardinality of $I$. Denote by $\mathfrak{\mathscr{C}}$
the collection of those partitions $\mathscr{P}$ of the set $I$
such that for each set $\Omega\in\mathscr{P}$ we have $\sum_{i\in\Omega}c_{i}=0$
and for each nonempty proper subset $\Omega'\subset\Omega$ we have
$\sum_{i\in\Omega'}c_{i}\not=0$. Suppose that $g_{a,b}(T)$ divides
$f(T)$. Then there is some $\mathscr{P}\in\mathfrak{\mathscr{C}}$
such that for each set $\Omega\in\mathscr{P}$ and each $i_{1},i_{2}\in\Omega$
we have that $ab$ divides $e_{i_{1}}-e_{i_{2}}$. 
\end{lem}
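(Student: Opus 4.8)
The plan is to reduce the whole statement to a single vanishing fact about the ``block sums'' of $f$ over residue classes of the exponents modulo $ab$, and then to read off the required partition by greedily decomposing each vanishing block sum.

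I would begin by recording the elementary identity $g_{a,b}(T)=1+T^{a}+T^{2a}+\cdots+T^{(b-1)a}$, valid for every $a\in\mathbb{N}$, together with its consequence in characteristic $p$: if $a=p^{s}a'$ with $p\nmid a'$, then $g_{a,b}=g_{a',b}^{\,p^{s}}$, where $g_{a',b}=\tfrac{T^{a'b}-1}{T^{a'}-1}$ is separable because $p\nmid a'b$. Hence the zeros of $g_{a,b}$ in $k^{\text{alg}}$ are precisely the $a'(b-1)$ distinct zeros $\zeta$ of $g_{a',b}$, each occurring with multiplicity $p^{s}$; since these $\zeta$ are nonzero and the Laurent polynomial $f$ has its only possible pole at $0$, the hypothesis ``$g_{a,b}$ divides $f$'' says exactly that $f$ vanishes to order at least $p^{s}$ at each such $\zeta$.

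Next, for $r\in\{0,\dots,ab-1\}$ set $I_{r}=\{i\in I:e_{i}\equiv r\pmod{ab}\}$ and $\lambda_{r}=\sum_{i\in I_{r}}c_{i}\in k$, and let $P(T)=\sum_{r=0}^{ab-1}\lambda_{r}T^{r}$. The core of the argument is the claim that every $\lambda_{r}$ vanishes. To prove it, write $f=\sum_{r}T^{r}G_{r}(T^{ab})$ with $G_{r}$ a Laurent polynomial satisfying $G_{r}(1)=\lambda_{r}$. At a zero $\zeta$ of $g_{a',b}$ one has $\zeta^{ab}=1$ and $T^{ab}-1=(T^{a'b}-1)^{p^{s}}$ vanishes to order exactly $p^{s}$ at $\zeta$; since $G_{r}(U)-\lambda_{r}$ is divisible by $U-1$, it follows that $G_{r}(T^{ab})-\lambda_{r}$ is divisible by $(T-\zeta)^{p^{s}}$ locally at $\zeta$, whence $f-P$ vanishes to order at least $p^{s}$ at $\zeta$. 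As $f$ itself vanishes to order at least $p^{s}$ there, so does $P$; running over the distinct $\zeta$'s gives $g_{a,b}\mid P$ in $k[T]$. Because $\deg P\le ab-1$ and $\deg g_{a,b}=ab-a$, we may write $P=g_{a,b}\cdot Q$ with $\deg Q\le a-1$; expanding $\bigl(\sum_{k=0}^{b-1}T^{ak}\bigr)\bigl(\sum_{\rho=0}^{a-1}q_{\rho}T^{\rho}\bigr)$ and using that $(k,\rho)\mapsto ak+\rho$ is a bijection onto $\{0,\dots,ab-1\}$ shows that $\lambda_{r}$ depends only on $r\bmod a$. If some $\lambda_{r_{0}}$ were nonzero, then $\lambda_{r}\ne 0$---hence $I_{r}\ne\emptyset$---for each of the $b$ values of $r$ in $\{0,\dots,ab-1\}$ congruent to $r_{0}$ modulo $a$, forcing $|I|\ge b$ and contradicting the hypothesis that $b$ exceeds $|I|$.

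Finally, since $\sum_{i\in I_{r}}c_{i}=0$ for every $r$, a routine induction on $|I_{r}|$ decomposes each nonempty $I_{r}$ into blocks $\Omega$ with $\sum_{i\in\Omega}c_{i}=0$ and no proper nonempty subset of $\Omega$ of zero sum: one repeatedly splits off a nonempty zero-sum subset of minimal cardinality, whose complement again has zero sum. The collection $\mathscr{P}$ of all these blocks, over all $r$, is then a partition of $I$ lying in $\mathscr{C}$, and since each block is contained in a single $I_{r}$, any $i_{1},i_{2}$ in a common block satisfy $e_{i_{1}}\equiv e_{i_{2}}\pmod{ab}$, as required. The one genuinely delicate point---and the only place where $p\nmid b$ and the characteristic-$p$ structure are essential---is the case $p\mid a$, where $g_{a,b}$ has repeated roots and one must upgrade the bare relation $f(\zeta)=0$ to vanishing of order $p^{s}$ via the local expansion above in order still to extract the linear relations among the $\lambda_{r}$.
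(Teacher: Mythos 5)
Your argument is correct, and it is worth noting that this paper does not actually contain a proof of Lemma \ref{lem:partition}: it is quoted from \cite{Sun-const_hyper}, so there is no in-paper argument to compare against. Your proof stands on its own. The key points all check out: writing $a=p^{s}a'$ with $p\nmid a'$ gives $g_{a,b}=g_{a',b}^{p^{s}}$ with $g_{a',b}$ separable (here $p\nmid b$ is used), so the paper's divisibility convention amounts exactly to $\mathrm{ord}_{\zeta}(f)\ge p^{s}$ at each of the $a'(b-1)$ distinct roots $\zeta$ of $g_{a',b}$; grouping exponents by residue mod $ab$, the local expansion via $T^{ab}-1=(T^{a'b}-1)^{p^{s}}$ correctly transfers this vanishing from $f$ to the polynomial $P(T)=\sum_{r}\lambda_{r}T^{r}$ of degree at most $ab-1$, forcing $g_{a,b}\mid P$ in $k[T]$; the factorization $P=g_{a,b}Q$ with $\deg Q\le a-1$ and the bijection $(k,\rho)\mapsto ak+\rho$ show $\lambda_{r}$ depends only on $r\bmod a$, and the pigeonhole with $b>|I|$ then kills every $\lambda_{r}$ (this is the only place the cardinality hypothesis enters, and you use it correctly); finally the greedy extraction of minimal nonempty zero-sum subsets inside each class $I_{r}$ produces a partition in $\mathscr{C}$ whose blocks are contained in single residue classes mod $ab$, which is precisely the asserted conclusion. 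Your explicit treatment of the inseparable case $p\mid a$ through orders of vanishing is exactly the delicate point such a proof must address, and your handling of it is sound; the argument also correctly tolerates repeated exponents $e_{i}$ and the degenerate case $P=0$.
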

Proved by an elementary linear-algebra argument, the following result
plays a crucial role so that Proposition 24 in the author's recent
work \cite{Sun-const_hyper} can be generalized to Proposition \ref{key},
which is the core in the proof of Theorem \ref{thm:main}.
\begin{lem}
\label{lem:lin_alg}Let $\mathcal{N}\subset\mathbb{N}$ be a subset
such that for each $m\in\mathbb{N}$ there is some $n\in\mathcal{N}$
divisible by $m$. Let $a_{j,i}\in\mathbb{Z}$ and $b_{j}\in\mathbb{Z}$,
$(j,i)\in\{1,\ldots,J\}\times\{1,\ldots,M\}$ be fixed integers. Suppose
that for each $n\in\mathcal{N}$ there are some $e_{i,n}$, $i\in\{1,\ldots,M\}$
such that $n$ divides $b_{j}-\sum_{i=1}^{M}a_{j,i}e_{i,n}$ for each
$j$. Then there is some $n_{0}\in\mathcal{N}$ with the following
property: for each $n\in\mathcal{N}$ divisible by $n_{0}$, there
are some $e'_{i,n}$, $i\in\{1,\ldots,M\}$, such that $\frac{n}{n_{0}}$
divides $e_{i,n}-e'_{i,n}$ and that $b_{j}=\sum_{i=1}^{M}a_{j,i}e'_{i,n}$
for each $j$.
\end{lem}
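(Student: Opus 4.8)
The plan is to recast the statement in terms of the Smith Normal Form of the coefficient matrix. Set $A=(a_{j,i})\in\mathbb{Z}^{J\times M}$ and $\mathbf{b}=(b_{j})\in\mathbb{Z}^{J}$, and fix $U\in\mathrm{GL}_{J}(\mathbb{Z})$ and $V\in\mathrm{GL}_{M}(\mathbb{Z})$ with $UAV=D$, where $D$ carries the positive elementary divisors $d_{1}\mid d_{2}\mid\cdots\mid d_{R}$ (here $R=\mathrm{rank}\,A$) in its first $R$ diagonal entries and zeros elsewhere. Writing $\mathbf{b}''=U\mathbf{b}$ and using the coordinates $\mathbf{x}''=V^{-1}\mathbf{x}$ on the solution side, and noting that $U$ and $V$ stay invertible modulo every integer, one sees that for each $n$ the congruences $\sum_{i}a_{j,i}e_{i}\equiv b_{j}\pmod n$ (all $j$) are solvable in the $e_{i}$ exactly when $\gcd(d_{k},n)\mid b''_{k}$ for $1\le k\le R$ and $n\mid b''_{k}$ for $R<k\le J$, while $A\mathbf{x}=\mathbf{b}$ is solvable over $\mathbb{Z}$ exactly when $d_{k}\mid b''_{k}$ for $1\le k\le R$ and $b''_{k}=0$ for $R<k\le J$.

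First I would pin down $n_{0}$ using the hypothesis on $\mathcal{N}$. Choose $m\in\mathbb{N}$ divisible by $d_{R}$ (take $m=1$ if $R=0$) and strictly larger than every $|b''_{k}|$ with $R<k\le J$, and let $n_{0}\in\mathcal{N}$ be a multiple of $m$, as provided by the standing hypothesis. Since the given system is solvable modulo $n_{0}$ (because $n_{0}\in\mathcal{N}$), the criterion above forces $b''_{k}=0$ for $R<k\le J$, as $|b''_{k}|<n_{0}$, and $d_{k}\mid b''_{k}$ for $1\le k\le R$, since $d_{k}\mid d_{R}\mid n_{0}$ makes $\gcd(d_{k},n_{0})=d_{k}$. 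Hence $A\mathbf{x}=\mathbf{b}$ admits an integer solution; fix one and call it $\mathbf{e}^{*}$.

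The crux is then the inclusion $L_{n}\subseteq L+\frac{n}{n_{0}}\mathbb{Z}^{M}$ for every $n$ divisible by $n_{0}$, where $L=\{\mathbf{x}\in\mathbb{Z}^{M}:A\mathbf{x}=0\}$ and $L_{n}=\{\mathbf{x}\in\mathbb{Z}^{M}:A\mathbf{x}\equiv 0\pmod n\}$. Passing again to $\mathbf{x}''=V^{-1}\mathbf{x}$, which preserves both $\mathbb{Z}^{M}$ and $\frac{n}{n_{0}}\mathbb{Z}^{M}$, one reads off $L=\{0\}^{R}\times\mathbb{Z}^{M-R}$, $L_{n}=(\prod_{k=1}^{R}\frac{n}{\gcd(d_{k},n)}\mathbb{Z})\times\mathbb{Z}^{M-R}$ and $L+\frac{n}{n_{0}}\mathbb{Z}^{M}=(\prod_{k=1}^{R}\frac{n}{n_{0}}\mathbb{Z})\times\mathbb{Z}^{M-R}$; the inclusion then reduces to $\frac{n}{n_{0}}\mid\frac{n}{\gcd(d_{k},n)}$ for $1\le k\le R$, that is, to $\gcd(d_{k},n)\mid n_{0}$, which holds since $\gcd(d_{k},n)\mid d_{k}\mid d_{R}\mid n_{0}$.

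Finally I would assemble the conclusion. Given any $n\in\mathcal{N}$ with $n_{0}\mid n$, the hypothesis supplies $\mathbf{e}_{n}=(e_{1,n},\ldots,e_{M,n})\in\mathbb{Z}^{M}$ with $A\mathbf{e}_{n}\equiv\mathbf{b}\pmod n$, so $\mathbf{e}_{n}-\mathbf{e}^{*}\in L_{n}\subseteq L+\frac{n}{n_{0}}\mathbb{Z}^{M}$; writing $\mathbf{e}_{n}-\mathbf{e}^{*}=\mathbf{w}+\frac{n}{n_{0}}\mathbf{z}$ with $\mathbf{w}\in L$ and $\mathbf{z}\in\mathbb{Z}^{M}$, and setting $\mathbf{e}'_{n}:=\mathbf{e}^{*}+\mathbf{w}=(e'_{1,n},\ldots,e'_{M,n})$, one gets $A\mathbf{e}'_{n}=\mathbf{b}$ exactly, while $\mathbf{e}_{n}-\mathbf{e}'_{n}=\frac{n}{n_{0}}\mathbf{z}$ gives $\frac{n}{n_{0}}\mid e_{i,n}-e'_{i,n}$ for every $i$. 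The one genuinely delicate point, I expect, is the second paragraph: choosing a single modulus $m$, hence a single $n_{0}\in\mathcal{N}$, that simultaneously annihilates the over-determined coordinates $b''_{k}$ ($k>R$) and is divisible by all the elementary divisors, so that the kernel comparison of the third paragraph applies; the remaining steps are routine integer linear algebra.
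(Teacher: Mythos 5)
Your proof is correct. It lives in the same general territory as the paper's argument (integer diagonalization of the coefficient matrix, with $n_0\in\mathcal{N}$ chosen to absorb the diagonal data), but the organization is genuinely different. The paper does a hands-on partial elimination of the augmented matrix $(a_{j,i}\,|\,b_j)$, using only row operations (including non-unimodular row scaling) and swaps of the first $M$ columns, chooses $n_0$ divisible by the product of the resulting pivots $a'_{1,1}\cdots a'_{R,R}$, and then corrects the given approximate solution coordinate-by-coordinate: the pivot variables are re-solved exactly while the non-pivot coordinates of $(e_{\sigma(i),n})$ are left untouched, and the rows beyond $R$ force $b'_j=0$ because $b'_j$ is divisible by arbitrarily large elements of $\mathcal{N}$. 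You instead invoke the Smith Normal Form with unimodular $U,V$, derive the solvability criteria mod $n$ and over $\mathbb{Z}$, pick $n_0\in\mathcal{N}$ divisible by $d_R$ and exceeding the obstruction entries $|b''_k|$ ($k>R$) so as to manufacture a single exact solution $\mathbf{e}^*$, and then reduce everything to the lattice inclusion $L_n\subseteq L+\frac{n}{n_0}\mathbb{Z}^M$, which in SNF coordinates is the one-line divisibility $\gcd(d_k,n)\mid n_0$. What your route buys is conceptual cleanliness: the unimodularity of $U,V$ makes the passage between $A$ and $D$ transparent modulo every $n$, the existence of an exact solution is cleanly separated from the approximation step, and the final correction $\mathbf{e}'_n=\mathbf{e}^*+\mathbf{w}$ is uniform in $n$. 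What the paper's route buys is elementarity (no appeal to SNF, only explicit elimination) and the feature that the constructed $e'_{i,n}$ agree with the given $e_{i,n}$ in all non-pivot coordinates, though that extra feature is not needed for the statement. Minor quibbles only: the parenthetical ``take $m=1$ if $R=0$'' should be read as saying the divisibility-by-$d_R$ condition is vacuous (you still need $m>|b''_k|$), and of the ``exactly when'' solvability criterion you only need, and only justify the use of, the necessity direction --- both are harmless.
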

\begin{proof}
Consider the $J$-by-$(M+1)$ matrix $(a_{j,i}\,|\,b_{j})$, where
$j$ indices rows and $i$ indices the first $M$ columns. Applying
a sequence of the following operations: interchanging any two rows
or any two of the first $M$ columns, multiplying some row by an integer,
adding some row to another one, we can transform this matrix to $(a'_{j,i}\,|\,b'_{j})$
such that for some $R\le\min\{J,M\}$ we have that $a'_{j,i}=0$ for
any $(j,i)\in\left(\{1,\ldots,J\}\times\{1,\ldots,R\}\right)\cup\left(\{R+1,\ldots,J\}\times\{1,\ldots,M\}\right)$
with $i\neq j$, and that $a'_{i,i}\neq0$ if and only if $i\in\{1,\ldots,R\}$.
Then there is some permutation $\sigma$ on $\{1,\ldots,M\}$ such
that $n$ divides $b'_{j}-\sum_{i=1}^{M}a'_{j,i}e_{\sigma(i),n}$
for each $n\in\mathcal{N}$ and each $j\in\{1,\ldots,J\}$. By the
properties of $\mathcal{N}$, there is some $n_{0}\in\mathcal{N}$
divisible by $\prod_{i=1}^{R}a'_{i,i}$. For any $j\in\{1,\ldots,R\}$
and any $n\in\mathcal{N}$ divisible by $n_{0}$, from the fact that
\[
b'_{j}-\sum_{i=1}^{M}a'_{j,i}e_{\sigma(i),n}=b'_{j}-a'_{j,j}e_{\sigma(j),n}-\sum_{i=R+1}^{M}a'_{j,i}e_{\sigma(i),n}
\]
is divisible by $n\in a'_{j,j}\mathbb{Z}$, we see that $a'_{j,j}$
divides $b'_{j}-\sum_{i=R+1}^{M}a'_{j,i}e_{\sigma(i),n}$, and thus
there exists a unique $e'_{\sigma(j),n}\in\mathbb{Z}$ satisfying
$b'_{j}-a'_{j,j}e'_{\sigma(j),n}-\sum_{i=R+1}^{M}a'_{j,i}e_{\sigma(i),n}=0$;
hence $n$ divides $a'_{j,j}(e'_{\sigma(j),n}-e{}_{\sigma(j),n})$.
For any $j\in\{1,\ldots,R\}$, since $n_{0}$ divisible by $a'_{j,j}$,
we conclude that $e_{\sigma(j),n}-e'{}_{\sigma(j),n}$ is divisible
by $\frac{n}{a'_{j,j}}$ and thus by \textbf{$\frac{n}{n_{0}}$} as
desired. For any $j\in\{R+1,\ldots,J\}$ and any $n\in\mathcal{N}$
divisible by $n_{0}$, we simply define $e'_{\sigma(j),n}=e{}_{\sigma(j),n}$;
thus $\frac{n}{n_{0}}$ divides $e_{\sigma(j),n}-e'_{\sigma(j),n}$
trivially. For every pair $(j,i)\in\{R+1,\ldots,J\}\times\{1,\ldots,M\},$
we have $a'_{j,i}=0$, hence $b'_{j}$ is divisible by every integer,
and therefore $b'_{j}=0$. Combined with the construction of $e'_{\sigma(j),n}$
for any $j\in\{1,\ldots,R\}$, we see that for any $j\in\{1,\ldots,J\}$
and any $n\in\mathcal{N}$ divisible by $n_{0}$, we always have $b'_{j}-\sum_{i=1}^{M}a'_{j,i}e'_{\sigma(i),n}=0$.
Transforming the matrix $(a'_{j,i}\,|\,b'_{j})$ back to $(a_{j,i}\,|\,b_{j})$,
we obtain that $b_{j}-\sum_{i=1}^{M}a{}_{j,i}e'_{i,n}=0$ as desired.
\end{proof}
We are ready to present the
\begin{proof}[Proof of Proposition \ref{key}]
Choose $D\in\mathbb{N}$ such that for each $j\in\{1,\ldots,J\}$,
we may write 
\begin{eqnarray*}
f_{j}(X_{1},\ldots,X_{M}) & = & \sum_{(d_{0},d_{1},\cdots,d_{M})\in\{0,1,\cdots,D\}^{M+1}}c_{(j,d_{0},d_{1},\cdots,d_{M})}T^{d_{0}}X_{1}^{d_{1}}\cdots X_{M}^{d_{M}}
\end{eqnarray*}
with each $c_{(j,d_{0},d_{1},\cdots,d_{M})}\in k$. For each $j\in\{1,\ldots,J\}$,
denote by $\mathfrak{\mathscr{C}}_{j}$ the collection of those partitions
$\mathscr{P}$ of the set $\{0,1,\cdots,D\}^{M+1}$ such that for
each set $\Omega\in\mathscr{P}$ we have $\sum_{(d_{0},d_{1},\cdots,d_{M})\in\Omega}c_{(j,d_{0},d_{1},\cdots,d_{M})}=0$
and for each nonempty proper subset $\Omega'\subset\Omega$ we have
$\sum_{(d_{0},d_{1},\cdots,d_{M})\in\Omega'}c_{(d_{0},d_{1},\cdots,d_{M})}\not=0$.
By Remark 14 in \cite{Sun-const_hyper}, we may choose some $a_{0}\in\mathbb{N}\setminus p\mathbb{N}$
and $b_{0}\in\mathbb{N}\setminus p\mathbb{N}$ with $b_{0}>(D+1)^{M+1}$
such that for any $a\in a_{0}\mathbb{N}$ the polynomial $g_{a,b_{0}}(T)$
is not divisible by any element in $\mathcal{S}$. By our assumption,
there is a strictly increasing sequence $\{N_{a}\}_{a\in a_{0}\mathbb{N}}$
such that $g_{a,b_{0}}(T)$ divides 
\[
f_{j}(T^{e_{1,N_{a}}},\ldots,T^{e_{M,N_{a}}})=\sum_{(d_{0},d_{1},\cdots,d_{M})\in\{0,1,\cdots,D\}^{M+1}}c_{(j,d_{0},d_{1},\cdots,d_{M})}T^{d_{0}+d_{1}e{}_{1,N_{a}}+\cdots+d_{M}e{}_{M,N_{a}}}
\]
for any $j\in\{1,\ldots,J\}$. Thus, by Lemma \ref{lem:partition},
for any $a\in a_{0}\mathbb{N}$ and any $j\in\{1,\ldots,J\}$, there
is some $\mathscr{P}_{j,a}\in\mathfrak{\mathscr{C}}_{j}$ such that
for each set $\Omega\in\mathscr{P}_{j,a}$, each $(d_{0},d_{1},\cdots,d_{M})$
and $(d'_{0},d'_{1},\cdots,d'_{M})$ in $\Omega$, any $i\in\{1,\ldots,M\}$
and any $n\ge N_{a}$, we have that $ab_{0}$ divides both $d_{0}-d'_{0}+(d_{1}-d'_{1})e{}_{1,N_{a}}+\cdots+(d_{M}-d'_{M})e{}_{M,N_{a}}$.
Consider the subset $\{\prod_{i=1}^{a_{0}+n}i:\,\,n\in\mathbb{N}\}\subset a_{0}\mathbb{N}$.
For each $j\in\{1,\ldots,J\}$ the collection $\mathscr{C}_{j}$ is
finite while $\{\prod_{i=1}^{a_{0}+n}i:\,\,n\in\mathbb{N}\}$ is infinite;
thus there is an infinite subset $\mathcal{A}$ of the set $\{\prod_{i=1}^{a_{0}+n}i:\,\,n\in\mathbb{N}\}$,
which is contained in $a_{0}\mathbb{N}$, such that for each $j\in\{1,\ldots,J\}$
the collection $\{\mathfrak{\mathscr{P}}_{j,a}:a\in\mathcal{A}\}$
consists of only one partition, denoted by $\mathfrak{\mathscr{P}}_{j}$.
Since $\mathcal{A}\subset\{\prod_{i=1}^{a_{0}+n}i:\,\,n\in\mathbb{N}\}$
is an infinite subset, it has the property that for each $m\in\mathbb{N}$
there is some $a\in\mathcal{A}$ divisible by $m$.

For any $a\in\mathcal{A}$, any $j\in\{1,\ldots,J\}$, we observe
that $(e_{1,N_{a}},\cdots,e{}_{M,N_{a}})$ satisfies the condition
that for each set $\Omega\in\mathscr{P}_{j}$, each $(d_{0},d_{1},\cdots,d_{M})$
and $(d'_{0},d'_{1},\cdots,d'_{M})$ in $\Omega$, we have that $a$
divides $d_{0}-d'_{0}+(d_{1}-d'_{1})e{}_{1,N_{a}}+\cdots+(d_{M}-d'_{M})e{}_{M,N_{a}}$.
Applying Lemma \ref{lem:lin_alg}, we obtain some $n_{0}\in\mathcal{A}$
with the following property: for each $a\in\mathcal{A}$ divisible
by $n_{0}$, there are some $e'_{i,N_{a}}$, $i\in\{1,\ldots,M\}$,
such that $\frac{a}{n_{0}}$ divides $e_{i,N_{a}}-e'_{i,N_{a}}$ and
that for each $j\in\{1,\ldots,J\}$, each set $\Omega\in\mathscr{P}_{j}$,
each $(d_{0},d_{1},\cdots,d_{M})$ and $(d'_{0},d'_{1},\cdots,d'_{M})$
in $\Omega$, we have 
\[
d_{0}-d'_{0}+(d_{1}-d'_{1})e'{}_{1,N_{a}}+\cdots+(d_{M}-d'_{M})e'{}_{M,N_{a}}=0;
\]
thus we may let $m_{a,j,\Omega}=d_{0}+d_{1}e'{}_{1,N_{a}}+\cdots+d_{M}e'{}_{M,N_{a}}$
for any $(d_{0},d_{1},\cdots,d_{M})\in\Omega$. Letting $\mathcal{N}=\{N_{a}:\,a\in\mathcal{A}\cap n_{0}\mathbb{N}\}$,
which is an infinite subset of $\mathbb{N}$ since $\mathcal{A}\subset a_{0}\mathbb{N}$
and the sequence $\{N_{a}\}_{a\in a_{0}\mathbb{N}}$ is strictly increasing,
we now show that the constructed sequence $\{(e'_{1,n},\ldots,e'_{M,n})\}_{n\in\mathcal{N}}$
satisfies the desired properties. To verify Property (\ref{enu:p1}),
we fix some $j\in\{1,\ldots,J\}$ and $n=N_{a}\in\mathcal{N}$ with
$a\in\mathcal{A}\cap n_{0}\mathbb{N}$. From construction, we have
\[
\begin{array}{cl}
 & f_{j}(T^{e'_{1,n}},\ldots,T^{e'_{M,n}})\\
= & \sum_{(d_{0},d_{1},\cdots,d_{M})\in\{0,1,\cdots,D\}^{M+1}}c_{(j,d_{0},d_{1},\cdots,d_{M})}T^{d_{0}+d_{1}e'{}_{1,N_{a}}+\cdots+d_{M}e'{}_{M,N_{a}}}\\
= & \sum_{\Omega\in\mathscr{P}_{j}}T^{m_{a,j,\Omega}}\sum_{(d_{0},d_{1},\cdots,d_{M})\in\Omega}c_{(j,d_{0},d_{1},\cdots,d_{M})}\\
= & 0
\end{array}
\]
as desired. To verify Property (\ref{enu:p2}), we fix some $\widetilde{Q}(T)\in k[T]$,
not divisible by $T$. Since each zero of $\widetilde{Q}(T)$ is in
$(k^{\text{alg}})^{*}$ and thus has a finite order, we can use the
property that that for each $m\in\mathbb{N}$ there is some element
of $\mathcal{A}\cap n_{0}\mathbb{N}$ divisible by $m$ and get some
$a\in\mathcal{A}\cap n_{0}\mathbb{N}$ such that $\widetilde{Q}(T)$
divides $T^{a}-1$. Using this property again yields some $a_{\widetilde{Q}}\in\mathcal{A}\cap n_{0}\mathbb{N}$
divisible by $an_{0}$. Let $\widetilde{N}_{\widetilde{Q}}=N_{a_{\widetilde{Q}}}$%
{} and fix some $n\in\mathcal{N}$ with $n\ge\widetilde{N}_{\widetilde{Q}}=N_{a_{\widetilde{Q}}}$.
Then $n=N_{a'}\in\mathcal{N}$ with some $a'\in\mathcal{A}\cap n_{0}\mathbb{N}$;
the latter condition implies, by construction, that $\frac{a'}{n_{0}}$
divides $e_{i,N_{a'}}-e'_{i,N_{a'}}=e_{i,n}-e'_{i,n}$ for each $i\in\{1,\ldots,M\}$.
Since the sequence $\{N_{a}\}_{a\in a_{0}\mathbb{N}}$ is strictly
increasing, this implies that $a'\ge a_{\widetilde{Q}}$; by construction,
both $a'$ and $a_{\widetilde{Q}}$ are in the set $\{\prod_{i=1}^{a_{0}+n}i:\,\,n\in\mathbb{N}\}$,
thus we get that $a'$ is divisible by $a_{\widetilde{Q}}$. Because
$\frac{a_{\widetilde{Q}}}{n_{0}}$ is divisible by $a$, we conclude
that $a$ divides $\frac{a'}{n_{0}}$ and thus divides $e_{i,n}-e'_{i,n}$
for each $i\in\{1,\ldots,M\}$; equivalently, we have shown that $T^{e_{i,n}}-T^{e'_{i,n}}$
is divisible by $T^{a}-1$ and thus by $\widetilde{Q}(T)$ as desired.
This completes the proof.
\end{proof}

\section*{acknowledgement}

This research is supported by Ministry of Science and Technology.
The plan number is 104-2115-M-001-012-MY3. I appreciate the helpful
conversation with Fu-Tsun Wei during the construction of the example
in Remark \ref{rem:old_red}.

\bibliographystyle{alpha}
\bibliography{mybib}

\end{document}